\newtheorem*{theorem*}{Theorem}
\theoremstyle{plain}
\newtheorem{theorem}{Theorem}[section]
\newtheorem{proposition}[theorem]{Proposition}
\newtheorem{lemma}[theorem]{Lemma}
\newtheorem{corollary}[theorem]{Corollary}
\theoremstyle{definition}
\newtheorem{remark}[theorem]{Remark}
\numberwithin{equation}{section}
\newcommand{\abs}[1]{\lvert#1\rvert}
\newcommand{\norm}[1]{\lVert#1\rVert}
\newcommand{\bigabs}[1]{\bigl\lvert#1\bigr\rvert}
\newcommand{\bignorm}[1]{\bigl\lVert#1\bigr\rVert}
\newcommand{\Bigabs}[1]{\Bigl\lvert#1\Bigr\rvert}
\newcommand{\Bignorm}[1]{\Bigl\lVert#1\Bigr\rVert}
\DeclareMathOperator{\Span}{Span}
\newcommand{\N}{{\mathbb N}}
\newcommand{\E}{{\mathbb E}}
\newcommand{\bP}{{\mathbb P}}
\newcommand{\R}{{\mathbb R}}
\newcommand{\cX}{{\mathcal X}}
\newcommand{\cF}{{\mathcal F}}
\newcommand{\cS}{{\mathcal S}}
\newcommand{\cM}{{\mathcal M}}
\newcommand{\one}{\mathbf{1}}
\title{Do law-invariant linear functionals collapse to the mean?}
\author[S.~Chen]{Shengzhong Chen}
\address{Department of Mathematics, Ryerson University, 350 Victoria Street, Toronto, Canada M5B2K3}
\email{sz.chen@ryerson.ca}
\author[N.~Gao]{Niushan Gao}
\address{Department of Mathematics, Ryerson University, 350 Victoria Street, Toronto, Canada M5B2K3}
\email{niushan@ryerson.ca}
\author[D.~Leung]{Denny H.~Leung}
\address{Department of Mathematics, National University of Singapore, Singapore 117543}
\email{dennyhl@u.nus.edu}
\author[L.~Li]{Lei Li}
\address{School of Mathematical Sciences and LPMC, Nankai University, Tianjin 300071, China
}
\email{leilee@nankai.edu.cn}
\thanks{The first author is supported by an NSERC fellowship. The second author acknowledges support of an NSERC Discovery Grant.}
\keywords{Law invariance, linear functional, collapse to the mean, rearrangement-invariant space}
\subjclass[2010]{46E30,60E05}
\date{\today}
\begin{document}
\maketitle
\begin{abstract}
     In this note, we show that, on a wide range of rearrangement-invariant spaces, a law-invariant bounded linear functional is a scalar multiple of the expectation. We also construct a  rearrangement-invariant space  on which this property fails.
\end{abstract}

\section{Introduction and Notation}
In the past few years, law-invariant risk measures have been of intense research interest in Financial Mathematics; see, e.g., \cite{BC,BCb,CGX,FS08,FS12,GLMX,Joui06,Joui08,KSZ,K01,LW,Svin,TL,WR,W}. Of particular interest to us are two recent papers \cite{BCb, FM} that investigate  when a (convex) law-invariant risk measure  collapses to the mean, i.e., being  a scalar multiple  of  expectation.
We are motivated to study the problem for linear functionals. Precisely, we investigate  whether law-invariant bounded linear functionals on rearrangement-invariant spaces automatically collapse to the mean.

In Section \ref{sec2} of this paper, we collect some preliminary facts on automatically  collapsing to the mean of law-invariant bounded linear functionals. In Section \ref{sec3}, we establish  a sufficient condition  for automatically collapsing to the mean. It turns out that nearly all classical rearrangement-invariant spaces, including Lebesgue spaces $L^p(1\leq p<\infty)$, Lorentz spaces $L^{p,q}$  ($1<p<\infty, 1\leq q\leq \infty$) and Orlicz spaces,  satisfy this condition and thus law-invariant bounded linear functionals on these spaces all collapse to the mean. In Section \ref{sec4}, we construct a  rearrangement-invariant space on which some law-invariant bounded linear functionals   fail to collapse to the mean.

We introduce the terminology and notation for this paper. Throughout the paper, $(\Omega,\mathcal{F},\mathbb{P})$ stands for a non-atomic probability space. Given a random variable $X'$ on another  probability space $(\Omega',\mathcal{F}',\mathbb{P}')$, there exists a random variable  $X$ on $\Omega$ having the same distribution as $X'$, i.e, $\bP(X\leq c)=\bP'(X'\leq c)$ for any $c\in\R$; in this case, we write  $X\sim X'$ (cf., e.g., \cite[Appendix 3]{HS4}).   For a random variable $X$ on $\Omega$, define its decreasing rearrangement by $$X^*(t)=\inf\big\{\lambda>0 : \bP(\abs{X}>\lambda)\leq t\big\},\quad t\in(0,1).$$
Clearly, if $X\sim Y$, then $X^*\equiv Y^*$. Moreover, if $(0,1)$ is endowed with the Lebesgue measure, then $X^*\sim \abs{X}$.
These notions and facts  extend in a plain manner to finite measure spaces of the same measure.  In particular, we will  work on a set $A\in\mathcal{F}$ endowed with the   probability structure restricted from $(\Omega,\mathcal{F},\mathbb{P})$. Given a (measurable) partition $\pi=(A_i)_{i\in I}$ of $\Omega$, where $I$ is at most countable, we can define random variables on $\Omega$ by specifying its values on each $A_i$ and then gluing the pieces together. A useful fact is as follows. Let $\pi'=(A_i')_{i\in I}$  be a partition of $\Omega'$  for another probablity space $(\Omega',\mathcal{F}',\mathbb{P}')$, such that $\bP(A_i)=\bP'(A_i')$ for each $i\in I$. If $ {X}$ and $ X'$ are random variables on $\Omega$ and $\Omega'$, respectively, such that $ {X}\vert_{A_i}\sim  {X}'\vert_{A_i'}$ for any $i\in I$, then $X \sim  {X}'$.

Let $L^0$ be the space of all random variables (to be precise,  equivalent classes of random variables modulo a.s.\ equality) on $(\Omega,\mathcal{F},\mathbb{P})$. Throughout the paper, $\cX$ stands for a rearrangement-invariant (abbr., r.i.) space over $(\Omega,\mathcal{F},\mathbb{P})$. That is, $\cX$ is a Banach space of functions in $L^0$ such that for any $X\in \cX$, (1) if $Y\in L^0$ and $|Y|\leq |X|$ a.s.\ then $Y\in \cX$ and $\|Y\|\leq \|X\|$, and (2) if $Z\in L^0$  and $Z\sim X$ then $Z\in \cX$ and $\|Z\|=\|X\|$.
We only consider $\cX\neq\{0\}$. In this case, it is well known (see, e.g., \cite[Ch.\ 2]{BS}) that $L^\infty\subset \cX\subset L^1$ and there exist  constants $C,C'>0$ such that \begin{align}\label{normpair}
    \norm{X}_{1}\leq C\norm{X}\text{ for every }X\in\cX,\text{ and } \norm{X}\leq C'\norm{X}_{\infty}\text{ for every }X\in L^\infty .
\end{align}

Given an r.i.\ space $\cX$, its \emph{associate space} $\cX'$   is the space of all $Y\in L^0$ such that $\E[\abs{XY}]<\infty$ for every $X\in \cX$.
$\cX'$ itself is also an r.i.\ space.
In the Banach lattice language, $\cX'$ corresponds to the order continuous dual $\cX_n^\sim$ of $\cX$, which consists of all linear functionals $\rho$ on $\cX$ such that $\rho(X_n)\rightarrow \rho(X)$ whenever $(X_n)\subset \cX$ converges a.s.\ to $X\in\cX$ and there exists $X_0\in\cX$ such that $\abs{X_n}\leq X_0$ for all $n\in\N$.
The \emph{order continuous part} $\cX_a$ of $\cX$ is the set of all  functions in $\cX$ that are order continuous (in other words, have absolutely continuous norm), i.e., all $X\in \cX$ such that $\lim_{\bP(A)\rightarrow0}\norm{X\mathbf{1}_A}=0$.
It is known that when $\cX\neq L^\infty$, $\cX_a$ is the norm closure of $L^\infty$ in $\cX$. Moreover, in this case, $X\in\cX_a \iff \lim_n\norm{X\mathbf{1}_{\{\abs{X}\geq n\}}}=0$.

An r.i.\ space $\cX$ is said to be \emph{order continuous} if $\cX_a=\cX$, or equivalently, if $\cX'=\cX^*$ (\cite[Theorem 2.4.2]{MN}), where $\cX^*$ is the norm continuous dual of $\cX$.
$\cX$ is said to be \emph{monotonically complete}\footnote{In the literature, monotonic completeness bears many other names such as {\em maximal} and {\em the weak Fatou property}. It is sometimes included in the definition of function spaces.} if the pointwise supremum of any increasing, norm bounded sequence in  $\cX_+$ belongs to $\cX$, or equivalently, if $\cX$ is isomorphic to $(\cX')'$ via the evaluation mapping (\cite[Theorem 2.4.22]{MN}). In this case, there exists a constant $M>0$ such that
\begin{align}
    \label{moncomconstant}
    \norm{X}\leq M\sup_{Y\in\cX',\norm{Y}\leq 1}\bigabs{\E[XY]}\quad\text{ for all }X\in\cX,
    \end{align}
    where the norm of $Y\in\cX'$ is the dual norm on $\cX$.

 We refer the reader to \cite{AB,MN} for unexplained facts and terminology on Banach lattices and order structures and to \cite{BS} for facts and results on general r.i.\ spaces.

\section{Preliminary Observations}\label{sec2}

The functionals considered in \cite{BCb,FM} are assumed to be  $\sigma(\cX,\cX')$ lower semicontinuous. In general, weak continuities are rather restrictive assumptions to impose. For linear functionals, it is straightforward to see that under law invariance, $\sigma(\cX,\cX')$ lower semicontinuity is equivalent to collapsing to the mean.
\begin{lemma}\label{quick}
Let $\cX$ be an r.i.\ space over a non-atomic probability space $(\Omega,\cF,\bP)$. Let $\rho$ be a law-invariant  linear functional  on $\cX$.
\begin{enumerate}
    \item $\rho$ is $\sigma(\cX,\cX')$ lower semicontinuous iff $\rho(X)=\rho(\one)\E[X]$ for any $X\in\cX$.
    \item If $\rho$ is norm continuous, then $\rho(X) = \rho(\one)\E[X]$ for all $X\in \cX_a$.
\end{enumerate}
\end{lemma}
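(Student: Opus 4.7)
The plan is to handle (1) by upgrading weak lower semicontinuity to weak continuity (a general fact for linear functionals), invoking the representation of $\rho$ by some $Y\in\cX'$, and then using law invariance together with non-atomicity of $\bP$ to force $Y$ to be essentially constant. The reverse implication of (1) is immediate: the map $X\mapsto\rho(\one)\E[X]$ is integration against the constant $\rho(\one)\in L^\infty\subset\cX'$, hence $\sigma(\cX,\cX')$-continuous. For the other direction, the lsc-to-continuous upgrade runs as follows: $\{\rho>-1\}$ is a weakly open neighborhood of $0$, so it contains a balanced basic neighborhood $U$; symmetry $U=-U$ and linearity then yield $|\rho|<1$ on $U$, so $\rho$ is weakly continuous at $0$. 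Thus $\rho(X)=\E[XY]$ for some $Y\in\cX'$.

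The crux of (1) is to show that $Y$ is essentially constant. I would argue by contradiction: if not, there exist $a<b$ with both $\bP(Y\le a)>0$ and $\bP(Y\ge b)>0$, and non-atomicity of $\bP$ lets me pick disjoint sets $A_1\subset\{Y\ge b\}$ and $A_2\subset\{Y\le a\}$ of equal positive probability. Then $\one_{A_1}\sim\one_{A_2}$, so law invariance forces $\E[Y\one_{A_1}]=\E[Y\one_{A_2}]$; but the left side is at least $b\bP(A_1)$ and the right side at most $a\bP(A_1)$, a contradiction. Hence $Y$ is a.s.\ a constant, which must equal $\rho(\one)=\E[Y]$, giving $\rho(X)=\rho(\one)\E[X]$.

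For (2), when $\cX=L^\infty$ we have $\cX_a=\{0\}$ and the claim is vacuous; otherwise $\cX_a$ is the $\cX$-norm closure of $L^\infty$. Since $\|\cdot\|_\infty$-convergence implies $\cX$-convergence by \eqref{normpair} and simple functions are $\|\cdot\|_\infty$-dense in $L^\infty$, norm continuity of $\rho$ reduces the problem to the indicator case $\rho(\one_A)=\rho(\one)\bP(A)$. Defining $f(p):=\rho(\one_A)$ for $\bP(A)=p$ (well-defined by law invariance since $\one_A\sim\one_B$ whenever $\bP(A)=\bP(B)$), disjoint additivity of indicators makes $f$ additive on $[0,1]$; the bound $|f(p)|\le\|\rho\|\cdot\|\one_A\|\le\|\rho\|\cdot\|\one\|$ makes $f$ bounded, hence linear, so $f(p)=p\rho(\one)$. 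The only step I expect to require real care is the two-set argument forcing $Y$ in (1) to be constant, since that is where non-atomicity genuinely enters; the rest is routine weak-topology manipulation and density in $L^\infty$.
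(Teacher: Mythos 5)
Your proof is correct. Part (1) is essentially the paper's argument: upgrade $\sigma(\cX,\cX')$ lower semicontinuity to continuity, represent $\rho$ as $X\mapsto\E[XY]$ with $Y\in\cX'$, and use two equiprobable sets on which $Y$ is separated by a constant to contradict law invariance; the paper simply cites the lsc-to-continuity upgrade where you spell it out. Part (2) is where you genuinely diverge. The paper restricts $\rho$ to $\cX_a$, uses order continuity of $\cX_a$ to identify $(\cX_a)^*$ with $(\cX_a)'$, and then applies part (1) to the r.i.\ space $\cX_a$ — a short but duality-heavy route that implicitly invokes Radon--Nikodym. You instead work from the bottom up: the well-defined set function $f(p)=\rho(\one_A)$ for $\bP(A)=p$ is additive (by non-atomicity) and bounded (by the lattice norm inequality $\norm{\one_A}\le\norm{\one}$), hence $f(p)=p\rho(\one)$; then you pass to simple functions, to $L^\infty$ via \eqref{normpair}, and to $\cX_a$ by norm density. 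This is precisely the elementary mechanism the paper packages separately as Proposition \ref{conts} and acknowledges in Remark \ref{remarkinfty}(2) as an alternative derivation of Lemma \ref{quick}(2); your version buys independence from the duality theorem at the cost of a slightly longer approximation chain. One small point worth making explicit in the final step: when passing from simple functions to $L^\infty$ and then to $\cX_a$, you need $\E[X_n]\to\E[X]$ as well as $\rho(X_n)\to\rho(X)$; this follows from $\norm{\cdot}_1\le C\norm{\cdot}$ in \eqref{normpair}, but you should say so.
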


\begin{proof}
(1) Recall that a linear functional $\rho$ on $\cX$  is $\sigma(\cX,\cX')$ lower semicontinuous iff it is $\sigma(\cX,\cX')$ continuous iff it lies in $\cX'$, i.e., there exists $Y\in\cX'$ such that
\begin{align}\label{lin}
    \rho(X)=\E[XY] \text{ for any }X\in\cX.
\end{align}
Thus the ``if'' direction follows by taking $Y=\rho(\one)\one\in\cX'$. For the reverse direction, suppose that $\rho$ is given by \eqref{lin}. If $Y$ is not   constant, then there exist two sets $A,B\in\cF$ with positive probabilities and $\alpha\in \R$ such that $Y\vert_{A}>\alpha>Y\vert_{B}$. By using non-atomicity and shrinking to smaller sets, we may assume that $\bP(A)=\bP(B)$. Then $\one_A\sim \one_B$ but $\rho(\one_A)=\E[Y\one_A]>\alpha\bP(A)=\alpha\bP(B)>\E[Y\one_B]=\rho(\one_B)$, contradicting law-invariance of $\rho$. Thus $Y\equiv c$ for some constant $c\in\R$. Taking $X=\one$ in \eqref{lin}, we get $c=\rho(\one)$. Putting this back into \eqref{lin}, we get $\rho(X)=\rho(\one)\E[X]$ for any $X\in\cX$.

(2) If $\cX=L^\infty$, then $\cX_a=\{0\}$ so that there is nothing to prove. Assume that $\cX\neq L^\infty$. Then $\cX_a$ is an r.i.\ space over $\Omega$ and by order continuity of $\cX_a$, $(\cX_a)^*=(\cX_a)'$. Thus being norm continuous, $\rho\vert_{\cX_a}$ is $\sigma(\cX_a,(\cX_a)^*)=\sigma(\cX_a,(\cX_a)')$ continuous. The desired result follows from (1).
\end{proof}

Recall that for a linear functional, $\sigma(\cX,\cX')$ lower semicontinuity implies norm continuity (boundedness). Thus following Lemma \ref{quick}, it is natural to  consider what happens if $\sigma(\cX,\cX')$ lower semicontinuity is weakened to plain norm boundedness. It turns out that collapsing to the mean no longer always holds for law-invariant norm bounded linear functionals. It, however, does hold on r.i.\ spaces satisfying \eqref{convexhull} below.

Lemma \ref{reduceto+} below reduces the problem to positive functionals. For the proof, we first recall  \cite[Lemma 2.3]{CGLLa}, which will be used later in the paper as well.

\begin{lemma}[\cite{CGLLa}]\label{movingorder}
Let $X_1,X_2$ be random variables  on a non-atomic probablity space $(\Omega,\mathcal{F},\mathbb{P})$ and $X'$ be a random variable on a non-atomic probablity space $(\Omega',\mathcal{F}',\mathbb{P}')$.
 If $X'\sim X_1\geq X_2$, then there exists a random variable $X_2'$ on $\Omega'$ such that
$$X'\geq X_2'\sim X_2.$$
The conclusion still holds if both ``$\geq $'' are replaced by ``$\leq$''.
\end{lemma}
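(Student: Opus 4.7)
The plan is to construct $X_2'$ directly on $\Omega'$ by splitting according to the atomic and continuous parts of the law of $X'$: a level-set realization on each atom, and a quantile transformation on the continuous part. The advantage of this approach is that no limit passage is required. The ``$\leq$'' version of the lemma follows by applying the ``$\geq$'' version to $-X_1, -X_2, -X'$, so I focus on ``$\geq$''.

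Let $C = \{c \in \R : \bP(X_1 = c) > 0\}$ be the (at most countable) set of atoms of the law of $X_1$, equivalently of $X'$. Partition $\Omega' = \Omega'_1 \sqcup \Omega'_2$ with $\Omega'_1 = \{X' \in C\}$ and $\Omega'_2 = \Omega' \setminus \Omega'_1$, and similarly $\Omega = \Omega_1 \sqcup \Omega_2$; since $X_1 \sim X'$, corresponding pieces have equal mass. For each $c \in C$, the level set $\{X' = c\}$ is a positive-measure subset of the non-atomic $\Omega'$ and thus inherits a non-atomic probability structure of the same mass as $\{X_1 = c\}$. The conditional law of $X_2$ on $\{X_1 = c\}$ is a Borel probability measure supported in $(-\infty, c]$; using the standard fact that any Borel probability on $\R$ is realized on any non-atomic probability space, pick a random variable $X_2'|_{\{X' = c\}}$ on $\{X' = c\}$ with this law. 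Automatically $X_2'|_{\{X' = c\}} \leq c = X'|_{\{X' = c\}}$.

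On the continuous parts, $X'|_{\Omega'_2}$ is atomless, while $X_2|_{\Omega_2} \leq X_1|_{\Omega_2} \sim X'|_{\Omega'_2}$. Writing $F$ and $G$ for the conditional cdfs of $X'|_{\Omega'_2}$ and $X_2|_{\Omega_2}$ respectively, the pointwise inequality $X_2 \leq X_1$ on $\Omega_2$ gives $G \geq F$ on $\R$ and hence $G^{-1} \leq F^{-1}$ on $(0,1)$. Define $X_2'|_{\Omega'_2} = G^{-1} \circ F(X'|_{\Omega'_2})$. Since $F$ is continuous, $F(X'|_{\Omega'_2})$ is uniform on $(0,1)$, so $X_2'|_{\Omega'_2} \sim X_2|_{\Omega_2}$; and the chain $G^{-1}(F(x)) \leq F^{-1}(F(x)) \leq x$ yields $X_2'|_{\Omega'_2} \leq X'|_{\Omega'_2}$. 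Gluing the atomic and continuous pieces via the partition fact recalled in Section 1 produces $X_2'$ on $\Omega'$ with $X_2' \leq X'$ and $X_2' \sim X_2$.

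The main obstacle is routine bookkeeping: verifying that the law of the glued $X_2'$ indeed matches that of $X_2$. This reduces to the decomposition of the law of $X_2$ as the convex combination $\bP(\Omega_2) \cdot \mathrm{Law}(X_2|_{\Omega_2}) + \sum_{c \in C} \bP(X_1 = c) \cdot \mathrm{Law}(X_2|_{\{X_1=c\}})$, each summand of which is reproduced by construction on the corresponding piece of $\Omega'$.
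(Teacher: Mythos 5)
This lemma is imported verbatim from \cite{CGLLa} and the present paper contains no proof of it, so there is nothing in-paper to compare against; judging your argument on its own, it is correct and complete. The decomposition of $\Omega'$ into the atoms $\{X'=c\}$, $c\in C$, and the atomless remainder $\Omega_2'$ is exactly what is needed to make the two standard couplings work: on each atom the conditional law of $X_2$ given $X_1=c$ is supported in $(-\infty,c]$ and can be realized on the non-atomic set $\{X'=c\}$, forcing $X_2'\leq c=X'$ there; on the atomless part the conditional cdf $F$ of $X'$ is continuous (all atoms having been removed), so $F(X')$ is conditionally uniform and $G^{-1}\circ F(X')$ has conditional law $G$, while $X_2\leq X_1$ gives $G\geq F$, hence $G^{-1}\leq F^{-1}$ and $G^{-1}(F(x))\leq F^{-1}(F(x))\leq x$ for the left-continuous generalized inverses. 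The gluing step is then the partition fact recalled in Section~1 of the paper together with the law-of-total-probability decomposition you state. The negation trick disposes of the ``$\leq$'' case. This is essentially the standard quantile-coupling proof of such comparison lemmas (and, as far as I can tell, in the same spirit as the argument in \cite{CGLLa}); the only points worth flagging are the ones you already addressed implicitly, namely that a prescribed Borel law can be realized on any non-atomic probability space and that the events $\{F(X')\in\{0,1\}\}$ where $G^{-1}$ might be infinite are null.
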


\begin{lemma}\label{reduceto+}
Let $\cX$ be an r.i.\ space over a non-atomic probability space. Let $\rho$ be a law-invariant bounded linear functional  on $\cX$. Then $\rho^\pm$ is also law invariant.
\end{lemma}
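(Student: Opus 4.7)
The plan is to use the Riesz--Kantorovich formula to express $\rho^\pm$ on positive elements and then match competitors of equal distribution via Lemma~\ref{movingorder}. Since $\cX$ is a Banach lattice, its dual $\cX^*$ is a Banach lattice, and the positive and negative parts of $\rho \in \cX^*$ are given, for $X \in \cX_+$, by
\begin{equation*}
\rho^+(X) = \sup\{\rho(Y) : 0 \leq Y \leq X\}, \qquad \rho^-(X) = \sup\{-\rho(Y) : 0 \leq Y \leq X\}.
\end{equation*}
Because law-invariance is preserved under $Y \mapsto -\rho(Y)$, the same argument will handle both signs, so I will focus on $\rho^+$.

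First, I would reduce to checking law-invariance on the positive cone. If $X \sim X'$ in $\cX$, then $X^+ \sim (X')^+$ and $X^- \sim (X')^-$ (they are obtained by applying the same Borel function to $X$ and $X'$). Since $\rho^+$ is linear and $\rho^+(X) = \rho^+(X^+) - \rho^+(X^-)$, it is enough to show that $\rho^+(X_1) = \rho^+(X_2)$ whenever $0 \leq X_1 \sim X_2$ in~$\cX$.

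For this, I would show $\rho^+(X_1) \leq \rho^+(X_2)$; the reverse inequality follows by swapping the roles of $X_1$ and $X_2$. Fix any $Y_1$ with $0 \leq Y_1 \leq X_1$. Since $X_2 \sim X_1 \geq Y_1$, Lemma~\ref{movingorder} (applied on the same probability space $(\Omega,\cF,\bP)$, with $X' = X_2$) yields a random variable $Y_2$ on $\Omega$ satisfying $X_2 \geq Y_2 \sim Y_1$. In particular $Y_2 \geq 0$ (as $Y_1 \geq 0$), so $0 \leq Y_2 \leq X_2$. Law-invariance of $\rho$ gives $\rho(Y_2) = \rho(Y_1)$, so that $\rho^+(X_2) \geq \rho(Y_2) = \rho(Y_1)$. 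Taking the supremum over all admissible $Y_1$ yields $\rho^+(X_2) \geq \rho^+(X_1)$, as required.

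The only genuinely non-routine step is the transfer of a sub-competitor $Y_1 \leq X_1$ to a sub-competitor $Y_2 \leq X_2$ with the same law; this is precisely what Lemma~\ref{movingorder} delivers. Once $\rho^+$ is law-invariant, the identity $\rho^- = \rho^+ - \rho$ shows $\rho^-$ is as well (or, equivalently, the same argument can be rerun with $-\rho$ in place of $\rho$).
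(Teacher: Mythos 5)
Your proposal is correct and follows essentially the same route as the paper: the Riesz--Kantorovich formula for $\rho^+$ on the positive cone, Lemma~\ref{movingorder} to transfer a competitor $0\leq Y_1\leq X_1$ to an equidistributed $0\leq Y_2\leq X_2$, a symmetry argument, reduction of the general case via $X^\pm\sim (X')^\pm$, and finally $\rho^-=\rho^+-\rho$. No issues.
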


\begin{proof}
Take any $X\in\cX_+$. Recall from the Riesz-Kantorovich formula (\cite[Theorem 1.18]{AB}) that
$\rho^+(X)=\sup\{\rho(Y):0\leq Y\leq X\}$.
Take any $X',Y\in\cX$ such that $X'\sim X\geq  Y\geq 0$.
By Lemma \ref{movingorder}, there exists a random variable $Z$  such that $ X'\geq  Z\sim Y $.  Since $Y\geq 0$, $Z\geq 0$.
Thus by law invariance of $\rho$, $\rho(Y)=\rho(Z)\leq \rho^+(Z) \leq \rho^+(X')$. Taking supremum over $Y$, we have $\rho^+(X)\leq \rho^+(X')$. Hence,  $\rho^+(X) = \rho^+(X')$ by symmetry. For general random variables $X,X'\in\cX$ with $X\sim X'$, $X^\pm\sim (X')^\pm$. Thus $\rho^+(X)=\rho^+(X^+)-\rho^+(X^-)=\rho^+((X')^+)-\rho^+((X')^-)=\rho^+(X')$. Therefore, $\rho^+$ is law invariant and hence so is $\rho^-=\rho^+-\rho$.
\end{proof}

A simple, but key, observation is that for a bounded linear functional $\rho$, collapsing to the mean and being law invariant can be characterized in terms of its kernel.
Let $\cX$ be an r.i.\ space over a non-atomic probability space $(\Omega,\cF,\bP)$.
Define
$$ \cS = \Span\{X-Y: X, Y \in \cX, X\sim Y\},\text{ and }\cM = \{X\in \cX: \E[X] = 0\}.$$
Clearly, $\cM$ is a closed subspace of $\cX$ and $$\overline{\cS}\subset \cM=\{X-\E[X]\cdot\mathbf{1}:X\in\cX\},$$ where the closure is taken in $\cX$ with respect to the norm topology.  The following lemma is immediate; we skip the proof.
\begin{lemma}\label{trivial}
Let $\cX$ be an r.i.\ space over a non-atomic probability space. Let $\rho$ be a   bounded linear functional  on $\cX$.   Consider the following statements.
\begin{enumerate}
\item $\rho(X) = \rho(\one)\E[X]$ for all $X\in \cX$.
\item $\cM\subset\ker\rho$.
\item $\rho$ is law invariant.
\item $\overline{\cS}\subset\ker\rho$.
\end{enumerate}
Then (1) $\iff$ (2) $\implies$ (3) $\iff$ (4).
\end{lemma}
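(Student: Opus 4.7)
All four implications are immediate once we unwind the definitions; the only slightly non-trivial ingredient is the continuity of $\rho$, which is used in the equivalence (3)$\iff$(4) to pass from $\cS$ to its closure.

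For (1)$\Rightarrow$(2), if $X\in\cM$ then $\E[X]=0$, so (1) gives $\rho(X)=\rho(\one)\cdot 0=0$. Conversely, for (2)$\Rightarrow$(1), given any $X\in\cX$ the random variable $X-\E[X]\cdot\one$ lies in $\cM$, hence in $\ker\rho$ by (2), yielding $\rho(X)=\E[X]\,\rho(\one)$ by linearity.

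For (1)$\Rightarrow$(3), if $X\sim Y$ then $\E[X]=\E[Y]$, so (1) immediately forces $\rho(X)=\rho(Y)$. The equivalence (3)$\iff$(4) follows from the definition of $\cS$: law invariance says $\rho(X-Y)=0$ whenever $X\sim Y$, which by linearity is equivalent to $\cS\subset\ker\rho$; since $\ker\rho$ is norm closed by boundedness of $\rho$, this is in turn equivalent to $\overline{\cS}\subset\ker\rho$.

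The only step requiring any thought is the use of norm continuity of $\rho$ in closing up $\cS$, and the only conceptual point is the elementary identity $\cM=\{X-\E[X]\one:X\in\cX\}$ already recorded before the lemma, which drives (1)$\Leftrightarrow$(2). No real obstacle arises; this is why the authors skip the proof.
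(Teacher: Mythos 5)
Your proof is correct and supplies exactly the routine verification the authors omit (the paper states the lemma is immediate and skips the proof): the equivalence (1)$\iff$(2) via the identity $\cM=\{X-\E[X]\cdot\one:X\in\cX\}$, the implication to (3) since equidistributed variables share the same mean, and (3)$\iff$(4) from linearity plus the norm-closedness of $\ker\rho$. Nothing further is needed.
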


The following proposition reveals some elements in $\overline{\cS}$.

 \begin{proposition}
\label{conts} Let $\cX$ be an r.i.\ space over a non-atomic probability space $(\Omega,\cF,\bP)$. Then
$X- \E[X]\cdot \one \in  \overline{\cS}$ for any $X\in L^\infty$.  The same holds for any $X\in \cX_a$.
\end{proposition}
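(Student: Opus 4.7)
My plan is to first handle indicator functions and then extend by linearity and density. The key claim is $\one_A - p\one \in \overline{\cS}$ for every $A \in \cF$, where $p = \bP(A)$. Granting this, linearity gives $Y - \E[Y]\one \in \overline{\cS}$ for every simple $Y = \sum_i c_i \one_{A_i}$ via $Y - \E[Y]\one = \sum_i c_i(\one_{A_i} - \bP(A_i)\one)$. For a general $X \in L^\infty$, I would take simple $X_n$ with $\norm{X_n - X}_\infty \to 0$; then $X_n - \E[X_n]\one \to X - \E[X]\one$ in $\cX$-norm by \eqref{normpair}, so the limit stays in $\overline{\cS}$. For $X \in \cX_a$: when $\cX = L^\infty$ nothing is required since $\cX_a = \{0\}$; when $\cX \neq L^\infty$, $\cX_a$ is the $\cX$-closure of $L^\infty$, so approximating $X$ by $Y_n \in L^\infty$ in $\cX$-norm and using continuity of $\E$ on $\cX$ (which follows from $\norm{\cdot}_1 \leq C\norm{\cdot}$ in \eqref{normpair}) yields $Y_n - \E[Y_n]\one \to X - \E[X]\one$ in $\cX$.

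For the indicator case, the idea is to construct, for each $n \in \N$, sets $B_1, \ldots, B_n \in \cF$ with $\bP(B_k) = p$ (so $\one_{B_k} \sim \one_A$) satisfying $\bignorm{\frac{1}{n}\sum_{k=1}^n \one_{B_k} - p\one}_\infty \leq \frac{1}{n}$. Given such $B_k$, the element $\one_A - \frac{1}{n}\sum_k \one_{B_k} = \frac{1}{n}\sum_k(\one_A - \one_{B_k})$ lies in $\cS$, and by \eqref{normpair} it sits within $C'/n$ of $\one_A - p\one$ in $\cX$-norm. Letting $n \to \infty$ forces $\one_A - p\one \in \overline{\cS}$.

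To build the $B_k$, I would fix a measurable $f \colon \Omega \to [0, 1]$ with uniform distribution $\bP(f \leq t) = t$ for all $t \in [0, 1]$, which exists on any non-atomic probability space via a standard dyadic bisection. Viewing $[0, 1)$ as the circle $\R/\Z$, let $I_k = [\frac{k-1}{n}, \frac{k-1}{n} + p) \bmod 1$ be half-open arcs of length $p$, and set $B_k = f^{-1}(I_k)$. Each $B_k$ has measure $p$, and for every $\omega$ the count $\sum_k \one_{B_k}(\omega)$ equals the number of lattice points $\{(k-1)/n : 1 \leq k \leq n\}$ lying in the arc $(f(\omega) - p, f(\omega)] \bmod 1$ of length $p$, which is always $\lfloor np \rfloor$ or $\lceil np \rceil$, hence at most $1$ away from $np$. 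The main obstacle is this near-balanced cover construction, essentially a Weyl-type equidistribution statement on $\R/\Z$; the remainder is routine. As a pleasant aside, when $np \in \N$ the count is exactly $np$ and $\one_A - p\one$ already lies in $\cS$ itself, so the closure is needed only to accommodate irrational measures $p$.
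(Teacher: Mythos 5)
Your proposal is correct, and the core step is handled by a genuinely different construction than the paper's. The paper proves $\one_A-\bP(A)\cdot\one\in\overline{\cS}$ by a three-stage case analysis: first sets with $\bP(A)\le\frac1m$ (via $m$ disjoint copies of $A$, with exact membership in $\cS$ when $\bP(A)=\frac1m$), then sets of rational measure $\frac nm$ (by partitioning into $n$ pieces of measure $\frac 1m$), and finally arbitrary sets by splitting off a rational-measure part and estimating the small remainder, yielding $\mathrm{d}(\one_A-\bP(A)\cdot\one,\cS)\le\frac2m\norm{\one}$. You instead push a uniform random variable $f$ onto the circle and take the $n$ rotated arcs $I_k$, so that $\sum_k\one_{B_k}$ is everywhere within $1$ of $np$; this gives the single clean bound $\mathrm{d}(\one_A-\bP(A)\cdot\one,\cS)\le\frac{C'}{n}$ for \emph{every} $A$ at once, with exact membership in $\cS$ whenever $np\in\N$ (recovering the paper's rational case). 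Your lattice-point count is right: the number of points $\frac jn$ in a half-open arc of length $p$ is $\lfloor na+np\rfloor-\lfloor na\rfloor\in\{\lfloor np\rfloor,\lceil np\rceil\}$. The only ingredient you use beyond the paper's is the existence of a uniformly distributed $f$ on a non-atomic probability space, which is standard and no stronger than the non-atomicity facts the paper already invokes (it cites the realization of arbitrary distributions on such spaces). The remaining steps --- linearity over simple functions, $\norm{\cdot}_\infty$-density in $L^\infty$ combined with \eqref{normpair}, and norm-density of $L^\infty$ in $\cX_a$ together with continuity of $\E$ --- coincide with the paper's. What your route buys is a unified, quantitative treatment of all measures $p$ without case distinctions; what the paper's route buys is that it never needs to construct a random variable with a prescribed continuous distribution, only finitely many disjoint sets of equal measure.
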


\begin{proof}
First, let $A$  be a measurable set such that $\bP(A) \leq \frac{1}{m}$ for some $m\in \N$.
By non-atomicity, there exist disjoint measurable sets $(A_i)^m_{i=1}$ of $\Omega$ such  that $A_1 = A$ and $\bP(A_i) = \bP(A)$ for all $i=1,\dots,m$.
Then
\begin{align}\label{smallcom}
    \one_A - \bP(A)\cdot \one = \frac{1}{m}\sum^m_{i=1}(\one_A - \one_{A_i}) +\frac{1}{m}\one_{\bigcup_{i=1}^mA_i}-\bP(A)\cdot\one.
\end{align}
In particular, if $\bP(A)=\frac{1}{m}$, then $\bP(\bigcup_{i=1}^mA_i)=1$, so that $\one_{\bigcup_{i=1}^mA_i}=\one$. Hence $\one_A-\bP(A)\cdot\one=\frac{1}{m}\sum^m_{i=1}(\one_A - \one_{A_i}) \in\cS$.

Next, let $A$ be a measurable set such that $\bP(A) = \frac{n}{m}$ for some $m,n \in \N$ with $1\leq n < m$.
Partition $A$ into measurable sets $B_1,\dots, B_n$, where $\bP(B_i) = \frac{1}{m}$ for   $i=1,\dots,n$.
By the above, $\bP(B_i)-\frac{1}{m}\cdot \one \in \cS$ for all $i=1,\dots,n$.
Hence
\[ \one_A - \frac{n}{m}\cdot \one = \sum^n_{i=1}\Big(\one_{B_i}-\frac{1}{m}\cdot \one\Big) \in \cS.\]

Now, let $A$ be any measurable set  with $\bP(A)\in(0,1)$. For any $m\geq 3$,  there exist disjoint subsets $B,C$ of $A$ such that $A=B\cup C$, $\bP(B)=\frac{n-1}{m}$ and $0\leq \bP(C)<\frac{1}{m}$, where $n\in[1,m]\cap\N$. Clearly, $\one_A-\bP(A)\cdot\one=\one_B-\bP(B)\cdot\one+\one_C-\bP(C)\cdot\one$. By the second case above, $\one_B-\bP(B)\cdot\one\in\cS$. By\eqref{smallcom}, $\mathrm{d}(\one_C-\bP(C)\cdot\one,\cS)\leq\frac{2}{m}\norm{\mathbf{1}}$.  Therefore, $$\mathrm{d}(\one_A-\bP(A)\cdot\one,\cS)\leq \frac{2}{m}\norm{\one}.$$
Since $m\geq 3$ is arbitrary,  we obtain   $\one_A-\bP(A)\cdot\one\in\overline{\cS}$.

From the above, it follows immediately  that $X - \E[X]\cdot \one \in \overline{\cS}$ for all simple functions $X$.
If $X \in L^\infty$, take a sequence of simple functions $(X_n)$ such that $\|X_n-X\|_\infty \to 0$.
Since $X_n - \E[X_n]\cdot \one \in \overline{\cS}$ for all $n\in\N$ and the sequence converges to $X - \E[X]\cdot \one$ in the norm of $\cX$ by \eqref{normpair},
$X - \E[X]\cdot \one \in \overline{\cS}$. This proves the first assertion.

The second assertion follows from the fact that, if $\cX_a\neq\{0\}$,  then $\cX_a$ is the norm closure of $L^\infty$ in $\cX$.
\end{proof}

\begin{remark}\label{remarkinfty}
  \begin{enumerate}
   \item Taking $\cX=L^\infty$ in   Proposition \ref{conts} and applying the implication (3)$\implies$(4) in Lemma \ref{trivial}, we obtain that if $\rho$ is a  law-invariant  bounded linear functional on $L^\infty$, then it collapses to the mean. This fact, which may be known in the literature,   can also be proved using the Radon-Nikydom theorem,  by setting $\mu(A)=\rho(\one_A)$ for $A\in\cF$. We skip the details.
      \item Similarly, the assertion for $\cX_a$ in Proposition \ref{conts}  implies Lemma \ref{quick}(2). The proof in Lemma \ref{quick}(2) also uses the Radon-Nikydom theorem, although implicitly,  when realizing $(\cX_a)^*$ as $(\cX_a)'$. We point out that Lemma \ref{quick}(2) implies that law-invariant bounded linear functionals on order continuous r.i.\ spaces   collapse to the mean, which is also likely known in the literature.
  \end{enumerate}
\end{remark}

 Below is an interesting consequence of Proposition   \ref{conts}/Remark \ref{remarkinfty}(1).
 \begin{corollary}\label{lowercontrol}
Let $\cX$ be an r.i.\ space over a non-atomic probability space and $\rho$ be a  law-invariant positive  linear functional on $\cX$. Then $\rho(X)\geq \rho(\one)\E[X]$ for any $X\in\cX_+$.
 \end{corollary}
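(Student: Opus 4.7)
The plan is to reduce the inequality to the $L^\infty$ case already handled in Remark~\ref{remarkinfty}(1). The key observation is that the restriction $\rho\vert_{L^\infty}$ is itself a law-invariant bounded linear functional on $L^\infty$: boundedness follows from the continuous embedding $L^\infty\hookrightarrow \cX$ provided by \eqref{normpair}, and law-invariance is inherited directly from $\rho$. Applying Remark~\ref{remarkinfty}(1) to this restriction, I would conclude
\[
\rho(Y) = \rho(\one)\E[Y]\quad\text{for every }Y\in L^\infty.
\]

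For a general $X\in\cX_+$, I would approximate $X$ from below by bounded truncations. Set $X_n := X\wedge n \in L^\infty_+$, so that $0\leq X_n\leq X$ and $X_n\uparrow X$ pointwise. Positivity of $\rho$ yields $\rho(X_n)\leq \rho(X)$, while the identity above yields $\rho(X_n)=\rho(\one)\E[X_n]$. Since $\cX\subset L^1$, $X\in L^1$, and monotone convergence gives $\E[X_n]\to\E[X]$. Passing to the limit,
\[
\rho(\one)\E[X] \;=\; \lim_n \rho(\one)\E[X_n] \;=\; \lim_n \rho(X_n) \;\leq\; \rho(X),
\]
as required.

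There is no real obstacle in this argument; it is a one-step monotone-truncation reduction to the $L^\infty$ collapse, invoking only positivity of $\rho$, the standard embedding $\cX\subset L^1$, and monotone convergence. Note that positivity of $\rho$ forces $\rho(\one)\geq 0$, but this plays no role beyond guaranteeing that the inequality is the ``correct'' one; the computation above does not require any sign condition on $\rho(\one)$.
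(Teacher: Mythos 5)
Your argument is correct and is essentially the paper's own proof: restrict $\rho$ to $L^\infty$, apply the collapse-to-the-mean result from Remark \ref{remarkinfty}(1), truncate $X$ (the paper uses $X\one_{\{X\leq n\}}$ where you use $X\wedge n$, an immaterial difference), and pass to the limit via positivity and monotone convergence. No issues.
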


\begin{proof}
Let $X\in\cX_+$. By Remark \ref{remarkinfty}(1), $\rho\vert_{L^\infty}$ collapses to the mean. Thus for any $n\geq 1$, since $X\one_{\{X\leq n\}}\in L^\infty$, $\rho(X)\geq \rho(X\one_{\{X\leq n\}})=\rho(\one)\E[X\one_{\{X\leq n\}}]$. Letting $n\rightarrow \infty$, we get $\rho(X)\geq \rho(\one)\E[X]$.
\end{proof}

We now present some equivalent conditions for collapsing to the mean.

\begin{proposition}\label{equicond}
Let $\cX$ be an r.i.\ space over a non-atomic probability space. The following are equivalent.
\begin{enumerate}
\item Every  law-invariant  positive linear functional $\rho$ on $\cX$ has the form  $\rho(X) = \rho(\one)\E[X]$ for all $X\in \cX$.
\item Every law-invariant  bounded linear functional $\rho$ on $\cX$ has the form  $\rho(X) = \rho(\one)\E[X]$ for all $X\in \cX$.
\item $\overline{\cS}  = \cM$.
\item $\cM \subseteq \overline{\cS - \cX_+}$.
\end{enumerate}
\end{proposition}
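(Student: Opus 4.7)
The plan is to close the cycle by establishing (2)$\iff$(1), (2)$\iff$(3), (3)$\implies$(4), and (4)$\implies$(1). The implication (2)$\implies$(1) is immediate since positive functionals are bounded. For (1)$\implies$(2), I would take a law-invariant bounded linear functional $\rho$ and use the Jordan decomposition $\rho=\rho^+-\rho^-$ (available since the order-continuous dual of the Banach lattice $\cX$ is Dedekind complete, so the Riesz--Kantorovich formula applies); by Lemma \ref{reduceto+}, each $\rho^\pm$ is a law-invariant positive linear functional, hence collapses to the mean by (1), and subtracting gives collapse for $\rho$.

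For (2)$\iff$(3), I would rely on Lemma \ref{trivial}. The inclusion $\overline{\cS}\subseteq \cM$ always holds. Assuming (3), any law-invariant bounded $\rho$ satisfies $\overline{\cS}\subseteq\ker\rho$ by Lemma \ref{trivial}, hence $\cM\subseteq\ker\rho$, which is collapse to the mean. Conversely, assume (2) but suppose, for contradiction, that some $X_0\in\cM\setminus\overline{\cS}$. By Hahn--Banach, pick a bounded linear functional $\rho$ vanishing on $\overline{\cS}$ with $\rho(X_0)\neq 0$; then by Lemma \ref{trivial}, $\rho$ is law invariant, hence by (2), $\rho(X_0)=\rho(\one)\E[X_0]=0$, a contradiction.

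The implication (3)$\implies$(4) is trivial since $\cS\subseteq \cS-\cX_+$ (take the $\cX_+$-summand zero). For (4)$\implies$(1), let $\rho$ be a law-invariant positive linear functional. Corollary \ref{lowercontrol} already gives $\rho(X)\geq \rho(\one)\E[X]$ for $X\in\cX_+$, so it suffices to prove the reverse inequality. Fix $X\in\cX_+$ and note $X-\E[X]\cdot\one\in\cM$; by (4), there are sequences $Y_n\in\cS$ and $Z_n\in\cX_+$ with $Y_n-Z_n\to X-\E[X]\cdot\one$ in norm. Since $\rho$ is law-invariant, Lemma \ref{trivial} gives $\rho(Y_n)=0$, so $\rho(Y_n-Z_n)=-\rho(Z_n)\leq 0$ by positivity; passing to the limit via norm-continuity of $\rho$ yields $\rho(X)-\rho(\one)\E[X]\leq 0$. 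Combined with Corollary \ref{lowercontrol}, we get equality on $\cX_+$, and linearity (via $X=X^+-X^-$) extends this to all of $\cX$.

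The argument is a tight circuit of applications of Lemmas \ref{trivial} and \ref{reduceto+}, Corollary \ref{lowercontrol}, and the Hahn--Banach theorem. The only slightly delicate step is (4)$\implies$(1): the asymmetric nature of the set $\cS-\cX_+$ is precisely what dovetails with the one-sided estimate in Corollary \ref{lowercontrol}, so that the ``$-\cX_+$'' piece produces an upper bound on $\rho$ rather than forcing the approximating sequence back into $\cS$. This interplay is the main conceptual content; the rest is bookkeeping.
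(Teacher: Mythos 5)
Your proposal is correct and follows essentially the same route as the paper: (1)$\iff$(2) via Lemma \ref{reduceto+} and the Riesz--Kantorovich decomposition, (2)$\iff$(3) via Lemma \ref{trivial} and Hahn--Banach separation from the closed subspace $\overline{\cS}$, and (4)$\implies$(1) by combining the one-sided bound of Corollary \ref{lowercontrol} with the vanishing of $\rho$ on $\cS$ and positivity on the $\cX_+$-part of the approximating sequence. The only cosmetic difference is that the paper phrases the last step as ``$\tau\leq 0$ on the dense set $\R\cdot\one+\cS-\cX_+$, hence $\tau=0$,'' whereas you argue pointwise on $\cX_+$; the substance is identical.
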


\begin{proof}
As   remarked earlier, (1)$\iff$(2) follows from Lemma \ref{reduceto+}.
If (2) holds, then by Lemma \ref{trivial}, any bounded linear functional on $\cX$ that vanishes on $\overline{\cS}$ also vanishes on  $\cM$.  Hence, $\cM \subseteq \overline{\cS}$.  The reverse inclusion is trivial.  Thus (3) holds.
 (3)$\implies$(4) is clear.
Assume that (4) holds.  Then
\[ \cX = \R\cdot \one + \cM \subseteq \R\cdot \one +\overline{\cS - \cX_+}.\]
Hence, $\R\cdot \one + \cS -\cX_+$ is dense in $\cX$.
Let $\rho$ be a law-invariant  positive functional on $\cX$.
Then the bounded linear functional $\tau(X) := \rho(X) - \rho(\one)\E[X]$   vanishes on $\R\cdot\one$ and $\cS$ and $\tau\leq 0$ on $-\cX_+$ by Corollary \ref{lowercontrol}. It follows that
$\tau(X) \leq 0$ for all $X \in \R\cdot \one + \cS -\cX_+$.
 Hence $\tau(X) \leq 0$ for all $X\in \cX$.  Thus $\tau = 0$, i.e., (1) holds.
\end{proof}

\section{A Sufficient Condition}\label{sec3}

The following property was shown by the authors in \cite{CGLLa} to play a pivotal role in  automatic continuity of law-invariant convex functionals:
\begin{align}\label{convexhull}
    \mathrm{d}\big(\mathcal{CL}(X), \cX_a\big) =0\text{ for any }X\in\cX_+, \text{ where } \mathcal{CL}(X)=\mathrm{co}(\{Y: Y\sim X\}).
\end{align}
It was termed there as the \emph{Almost Order Continuous Equidistributional Average (AOCEA)} property.
Here, we show that the AOCEA property is also sufficient to ensure that all law-invariant bounded linear functionals collapse to the mean.

\begin{proposition}\label{sufficient}
Let $\cX$ be an r.i.\ space over a non-atomic probability space other than $L^\infty$ that satisfies the AOCEA property.
Then $\overline{\cS} = \cM$.  Hence every   law-invariant bounded linear functional $\rho$ on $\cX$ has the form  $\rho(X) = \rho(\one)\E[X]$ for all $X\in \cX$.
\end{proposition}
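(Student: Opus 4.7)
My plan is to establish $\overline{\cS} = \cM$; the conclusion about law-invariant bounded linear functionals then follows at once from Proposition \ref{equicond}. The inclusion $\overline{\cS} \subseteq \cM$ is automatic, so the real task is to place an arbitrary $X \in \cX$ into $\overline{\cS}$ after subtracting $\E[X]\cdot\one$. By writing $X = X^+ - X^-$ and using linearity of both $\E$ and $\cS$, I would reduce to the case $X \in \cX_+$.

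For such $X$, I would feed the AOCEA hypothesis into the argument: for each $n$ it supplies a convex combination $Z_n = \sum_i \lambda_i^n Z_i^n$ of elements $Z_i^n \sim X$, together with $W_n \in \cX_a$ satisfying $\|Z_n - W_n\| \to 0$. The key observation I will rely on is that
\[ X - Z_n = \sum_i \lambda_i^n (X - Z_i^n) \in \cS, \]
so $X$ differs from its equimeasurable convex average by an element of $\cS$ itself. Since each $Z_i^n \sim X$ forces $\E[Z_n] = \E[X]$, the bound \eqref{normpair} gives $|\E[W_n] - \E[X]| \leq C\|W_n - Z_n\| \to 0$ as well.

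Next I would apply the second assertion of Proposition \ref{conts} (legitimately, since $\cX \neq L^\infty$) to conclude $W_n - \E[W_n]\cdot \one \in \overline{\cS}$, and then assemble
\[ X - \E[X]\cdot\one = (X - Z_n) + (Z_n - W_n) + (W_n - \E[W_n]\cdot\one) + (\E[W_n] - \E[X])\cdot\one. \]
The first and third summands lie in $\overline{\cS}$, while the second and fourth have norms tending to zero (the fourth because $\one \in L^\infty \subset \cX$ has finite norm). Letting $n \to \infty$ forces $X - \E[X]\cdot\one \in \overline{\cS}$, as required.

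The main conceptual point, and such obstacle as there is, is the recognition that AOCEA is tailor-made for the present situation: it produces a mean-preserving approximation of any $X \in \cX_+$ by elements of $\cX_a$ modulo $\cS$, after which Proposition \ref{conts} disposes of the $\cX_a$ piece inside $\overline{\cS}$. Once this bridge is in place the rest is bookkeeping with four controlled summands, and I expect no further subtleties.
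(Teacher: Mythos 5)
Your proof is correct and follows essentially the same route as the paper's: extract from the AOCEA property an equimeasurable convex average $Z_n$ of $X$ (which differs from $X$ by an element of $\cS$ and has the same mean) lying close to some $W_n\in\cX_a$, dispose of $W_n-\E[W_n]\cdot\one$ via Proposition \ref{conts}, and control the two remaining error terms using \eqref{normpair}. The paper packages the same four-term decomposition as the distance estimate $\mathrm{d}(X-\E[X]\cdot\one,\overline{\cS})\leq (C+1)\varepsilon$, so the two arguments differ only in presentation.
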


\begin{proof}
By \eqref{normpair},  there is a finite constant $C$ such that $\norm{X}_1 \leq C\norm{X}$ for all $X \in \cX$.
We may also assume that $\norm{\mathbf{1}} =1$. Let $X\in \cX_+$ and let $\varepsilon > 0$ be given.
By \eqref{convexhull}, choose $X' \in \mathcal{CL}(X)$ and $V\in\cX_a$ such that $\norm{X'-V}<\varepsilon$.
By Proposition \ref{conts}, $$V-\E[V]\cdot\one\in\overline{\cS}.$$
From the equation $X'-\E[X']\cdot \one=(X'-V)-\E[X'-V]\cdot\one+V-\E[V]\cdot \one$,  one sees that
$$\mathrm{d}(X'-\E[X']\cdot\one,\overline{\cS}) \leq \norm{X'-V}+\norm{X'-V}_1\norm{\one}<(C+1)\varepsilon.$$
Since $X-X'\in \cS$ and $\E[X]=\E[X']$, it follows from $X-\E[X]\cdot\one=(X-X')+X'-\E[X']\cdot \one$ that
\[ d(X-\E[X]\cdot\one, \overline{\cS}) \leq (C+1) \varepsilon.\]
By arbitrariness of $\varepsilon$, $X-\E[X]\cdot \one \in \overline{\cS}$ for all $X \in \cX_+$.
It follows easily that $X- \E[X]\cdot \one \in \overline{\cS}$ for all $X\in \cX$. Therefore, $\cM \subseteq \overline{\cS}$.
The reverse inclusion is clear.  This proves that $\cM = \overline{\cS}$. The second asssertion follows from Proposition \ref{equicond}.
\end{proof}

An alternative proof of this proposition  in the spirit of \cite{CGLLa} is included in the appendix.

It  was also proved in \cite{CGLLa} that when $\cX\neq L^\infty$, \eqref{convexhull} is equivalent to the following  condition:
\begin{align}
\nonumber&\text{For any }X \in \cX_+,  \text{any } \varepsilon >0,  \text{ and any sequence of measurable sets } (A_n)_{n=1}^\infty \\
&\label{3.2}
\text{with }A_n \downarrow \emptyset, \text{ there exist natural numbers }(n_i)_{i=1}^k, \text{  random variables } (Z_i)_{i=1}^k,  \\
\nonumber&\text{and  a convex combination }\sum\nolimits_{i=1}^k\lambda_i Z_i \text{ such that }Z_i \sim X\mathbf{1}_{A_{n_i}}  \text{  for } i=1,\dots,k \\
\nonumber&\text{and }\Bignorm{\sum\nolimits_{i=1}^k\lambda_i Z_i} <\varepsilon.
\end{align}
We   compare it with the following more verifiable condition that uses disjoint $Z_i$'s:
\begin{align}
\nonumber&\text{For any } X\in \cX, \text{ any }\varepsilon>0,  \text{ any   sequence of measurable sets }   (A_n)_{n=1}^\infty  \\
&\label{3.3} \text{with } A_n \downarrow \emptyset \text{ and }\sum_{n=1}^\infty\bP(A_n)\leq 1,
\text{ and any disjoint sequence of measurable}  \\\nonumber & \text{functions }(Z_n)_{n=1}^\infty  \text{ such}
\text{  that } Z_n\sim X\one_{A_n}  \text{ for all } n\geq 1, \text{ there exist }n_1,\dots,
\\ \nonumber&n_k\in \N \text{ such that } \Bignorm{\frac{1}{k}\sum^k_{i=1}Z_{n_i}} <\varepsilon.
\end{align}

\begin{proposition}\label{twocondi}
Let $\cX$ be an r.i.\ space over a non-atomic probability space $(\Omega,\cF,\bP)$ other than $L^\infty$. Then \eqref{3.3}$\implies$\eqref{3.2} (equivalently, \eqref{convexhull}). The reverse is true if $\cX$ is  either monotonically complete or order continuous.
\end{proposition}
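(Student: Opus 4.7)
My plan is to prove the two implications separately. For the forward direction \eqref{3.3}$\Rightarrow$\eqref{3.2}, no structural assumption on $\cX$ is needed. Given $X\in\cX_+$, $\varepsilon>0$, and $A_n\downarrow\emptyset$, I would pass to a subsequence $(A_{n_j})$ with $\sum_j\bP(A_{n_j})\le 1$ (possible because $\bP(A_n)\to 0$) and use non-atomicity to construct disjoint $Z_j\sim X\one_{A_{n_j}}$. Applying \eqref{3.3} to this data yields indices $j_1,\dots,j_m$ with $\bignorm{\frac{1}{m}\sum_{\ell=1}^m Z_{j_\ell}}<\varepsilon$, which is exactly a convex combination of the form required by \eqref{3.2}, with weights $\lambda_\ell=1/m$, indices $n_{j_\ell}$, and copies $Z_{j_\ell}$.

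For the reverse direction when $\cX$ is order continuous, the proof is immediate: $\cX_a=\cX$ forces $\|X\one_{A_n}\|\to 0$, so $\|Z_n\|\to 0$ and \eqref{3.3} holds with $k=1$.

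The substantive case is monotonic completeness. Reduce to $X,Z_n\ge 0$ (using $|Z_n|\sim|X|\one_{A_n}$ and the fact that $|\sum_iZ_{n_i}|=\sum_i|Z_{n_i}|$ for disjoint $Z_{n_i}$). Apply \eqref{3.2} to $X$ with threshold $\varepsilon/(2M)$, where $M$ is the constant from \eqref{moncomconstant}, to obtain $m_1<\cdots<m_k$, copies $Z'_i\sim X\one_{A_{m_i}}$, and weights $\lambda_i$ summing to $1$ with $\bignorm{\sum_i\lambda_iZ'_i}<\varepsilon/(2M)$. The key observation is that the disjoint coupling $(Z_{m_i})$ of the marginals $W_i:=X\one_{A_{m_i}}$ minimises the weighted sum in convex order. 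Indeed, for any convex $\phi$, the disjointness of the $Z_{m_i}$ yields the identity $\E[\phi(\sum_i\lambda_iZ_{m_i})]=\sum_i\E[\phi(\lambda_iW_i)]-(k-1)\phi(0)$, while the pointwise inequality $\phi(\sum_i a_i)\ge\sum_i\phi(a_i)-(k-1)\phi(0)$ for $a_i\ge 0$ (a consequence of the superadditivity on $[0,\infty)$ of $\psi(x):=\phi(x)-\phi(0)$) gives $\E[\phi(\sum_i\lambda_iZ'_i)]\ge\sum_i\E[\phi(\lambda_iW_i)]-(k-1)\phi(0)$; hence $\sum_i\lambda_iZ_{m_i}\leq_{\mathrm{cx}}\sum_i\lambda_iZ'_i$, and equivalently $(\sum_i\lambda_iZ_{m_i})^*\prec(\sum_i\lambda_iZ'_i)^*$ in Hardy--Littlewood--P\'olya majorisation.

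Monotonic completeness then promotes this majorisation to a norm inequality. For $U\in\cX'_+$ with $\|U\|_{\cX'}\le 1$, Hardy--Littlewood, integration by parts against the decreasing $U^*$ (using $\prec$), and the duality $\int_0^1 U^*g^*\,dt\le\|U\|_{\cX'}\|g\|_\cX$ (valid on non-atomic probability spaces) combine to give
\[\E\Big[U\sum\nolimits_i\lambda_iZ_{m_i}\Big]\le\int_0^1 U^*\Big(\sum\nolimits_i\lambda_iZ_{m_i}\Big)^*dt\le\int_0^1 U^*\Big(\sum\nolimits_i\lambda_iZ'_i\Big)^*dt\le\Big\|\sum\nolimits_i\lambda_iZ'_i\Big\|_\cX,\]
and \eqref{moncomconstant} upgrades this to $\bignorm{\sum_i\lambda_iZ_{m_i}}\le M\bignorm{\sum_i\lambda_iZ'_i}<\varepsilon/2$. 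Finally, approximate $\lambda_i$ by $p_i/K$ with $\sum_ip_i=K$ and $|\lambda_i-p_i/K|\le 1/K$; the disjointness of the $Z_{m_i}$ gives $\bignorm{\sum_i(\lambda_i-p_i/K)Z_{m_i}}\le(k/K)\|X\|$, which falls below $\varepsilon/2$ once $K$ is large, producing the equal-weighted average $\frac{1}{K}\sum_{\ell=1}^KZ_{m_{i(\ell)}}$ (with the repetitions allowed in \eqref{3.3}) of norm below $\varepsilon$. The main obstacle is precisely the Calder\'on-type conversion of $\leq_{\mathrm{cx}}$ into a norm bound with constant $M$: monotonic completeness is exactly what enables the duality argument here, and the order continuous case bypasses the issue via the trivial $\|X\one_{A_n}\|\to 0$ shortcut.
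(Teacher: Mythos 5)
Your forward direction and the order-continuous case coincide with the paper's. For the substantive case (monotonic completeness) you take a genuinely different route. The paper does not apply \eqref{3.2} to the given sets $A_n$ at all: it applies it to the level sets $\{|X|\ge\alpha_m\}$, uses Lemma \ref{movingorder} to push all $k$ copies down to the single deepest level set, invokes the majorisation Lemma \ref{marjorization} (which is the special case of your ``disjoint coupling is minimal in convex order'' observation in which all marginals are equal) together with the Calder\'on-type estimate $\|Y\|\le M\|X\|$ for $Y\prec X$, and finally transfers from the level set to arbitrary sets of smaller measure via Lemma \ref{another}, which exploits the fact that $|X|$ is everywhere at least as large on a level set as off it. You instead apply \eqref{3.2} directly to the $A_n$ and prove that the disjoint coupling minimises $\sum_i\lambda_iZ_i'$ in convex order over all couplings of the marginals $W_i=X\one_{A_{m_i}}$; the identity $\E[\phi(\sum_i\lambda_iZ_{m_i})]=\sum_i\E[\phi(\lambda_iW_i)]-(k-1)\phi(0)$ and the superadditivity argument are correct, and the passage from convex order to $\prec$ (the means agree) and then to a norm bound via Hardy--Littlewood, Hardy's lemma and \eqref{moncomconstant} reproduces exactly the remark preceding Lemma \ref{marjorization}. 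This gives a cleaner conceptual statement and dispenses with Lemmas \ref{another} and \ref{marjorization}, at the cost of the final rounding step.

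That rounding step is the one place where your proof delivers something weaker than the paper's. You end with an equal-weight average $\frac1K\sum_{\ell=1}^KZ_{m_{i(\ell)}}$ in which indices repeat, i.e.\ with the combination $\sum_i(p_i/K)Z_{m_i}$; the summands are then no longer pairwise disjoint, so the average is not of the form $\frac1k\bigoplus_{i=1}^kX\one_{A_{n_i}}$. The literal wording of \eqref{3.3} does not forbid repeated indices, so you do establish the statement as written; but the paper's proof (and its reformulation of \eqref{3.3} as ``$\bignorm{\frac1k\bigoplus_{i=1}^kX\one_{A_{n_i}}}<\varepsilon$'', as well as the upper-$p$-estimate verification in the remark that follows) clearly intends $k$ \emph{distinct} indices. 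Producing distinct indices is not a cosmetic fix in your framework: the weights $\lambda_i$ from \eqref{3.2} are attached to the specific indices $m_i$, and since the sequence $(Z_n)$ is prescribed you cannot duplicate $Z_{m_i}$ disjointly. The paper circumvents this precisely by routing through the level sets, which lets it choose the $n_i$ freely, hence distinct, among all indices with $\bP(A_{n_i})\le\bP(|X|\ge\alpha_{m_k})$. You should either state explicitly that you are proving the repeated-index reading of \eqref{3.3}, or add a transfer step in the spirit of Lemma \ref{another} to recover the distinct-index version.
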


\begin{proof}[Proof of Proposition \ref{twocondi}; the first assertion] Assume that \eqref{3.3} holds. To verify  \eqref{3.2}, take any $X\in\cX_+$, $A_n\downarrow\emptyset$, and $\varepsilon>0$. By passing to a subsequence of $(A_n)$, we may assume that $\sum_{n=1}^\infty\bP(A_n)\leq 1$. By non-atomicity, there exists a disjoint sequence of measurable sets $(B_n)_{n=1}^\infty$ such that $\bP(A_n)=\bP(B_n)$ for all $n\geq 1$. For each $n\in\N$, since $\bP(X\one_{A_n}\neq 0)\leq \bP(A_n)=\bP(B_n)$, by non-atomicity again, there exists a random variable $Z_n$, supported in $B_n$, such that $Z_n\sim X\one_{A_n}$. In particular, $Z_n$'s are disjoint. Applying \eqref{3.3} to $(Z_n)$, we obtain the desired $Z_i$'s and convex combination.\end{proof}

For the proof of \eqref{3.2}$\implies$\eqref{3.3} in the second assertion, we need
some technical lemmas.
For random variables  $X_1,\dots,X_k$ such that $\sum^k_{i=1}\bP(X_i\neq 0)\leq 1$, let $$\bigoplus^k_{i=1}X_i = \sum^k_{i=1}Y_i,$$
where $Y_i$'s are disjoint and $Y_i\sim X_i$ for each $i$. Clearly, $$\Bigabs{\bigoplus^k_{i=1}X_i}=\Bigabs{\sum^k_{i=1}Y_i}=\sum^k_{i=1}\abs{Y_i}=\bigoplus^k_{i=1}\abs{X_i}.$$
Although $\bigoplus^k_{i=1}X_i$ is not uniquely defined as a random variable, its distribution and hence its norm are uniquely determined by $X_i$'s and that are what we use.

\begin{lemma}\label{another}
Let $\cX$ be an r.i.\ space over a non-atomic probability space $(\Omega,\cF,\bP)$ and $X\in\cX$.  Let $A\in\cF$ be such that $\bP(A)\leq \frac{1}{k}$ for some $k\in\N$  and  that
\begin{align}\label{cccc}
\alpha:=\inf\{\abs{X(\omega)}: \omega \in A\} \geq \|X\one_{A^c}\|_\infty.
\end{align}
Let $A_i\in\cF$ be such that $\bP(A_i)\leq \bP(A)$, $i=1,\dots,k$. Then $\bignorm{\bigoplus_{i=1}^k X\one_{A_i}}\leq \bignorm{\bigoplus_{i=1}^kX\one_{A}}$.
\end{lemma}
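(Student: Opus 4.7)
The plan is to reduce the desired norm inequality to a pointwise comparison of tail distribution functions on $(0,\infty)$, and then pass to norms via rearrangement invariance. Write $Z := \bigoplus_{i=1}^k X\one_{A_i}$ as $\sum_i Y_i$ with $Y_i \sim X\one_{A_i}$ disjoint, and $W := \bigoplus_{i=1}^k X\one_A$ as $\sum_i Y'_i$ with $Y'_i \sim X\one_A$ disjoint. Disjointness forces, for every $\lambda>0$,
\[
\bP(\abs{Z}>\lambda) = \sum_{i=1}^k \bP(\abs{X}>\lambda,\, A_i), \qquad \bP(\abs{W}>\lambda) = k\,\bP(\abs{X}>\lambda,\, A).
\]

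The crux is to verify $\sum_{i=1}^k \bP(\abs{X}>\lambda,\, A_i) \leq k\,\bP(\abs{X}>\lambda,\, A)$ for every $\lambda>0$; the hypothesis \eqref{cccc} cleaves this into two regimes. If $\lambda \geq \alpha$, then $\abs{X} \leq \norm{X\one_{A^c}}_\infty \leq \alpha \leq \lambda$ on $A^c$ forces $\{\abs{X}>\lambda\}\subseteq A$, so $\bP(\abs{X}>\lambda, A_i) \leq \bP(\abs{X}>\lambda, A)$ termwise. If $0<\lambda<\alpha$, then $\abs{X} \geq \alpha > \lambda$ on $A$ gives $\bP(\abs{X}>\lambda, A) = \bP(A)$, while $\bP(\abs{X}>\lambda, A_i) \leq \bP(A_i) \leq \bP(A)$. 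Summing over $i$ yields the required bound in both cases.

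Converting this tail dominance into the norm inequality is standard. The inequality $\bP(\abs{Z}>\lambda) \leq \bP(\abs{W}>\lambda)$ for every $\lambda>0$ translates directly into $\abs{Z}^* \leq \abs{W}^*$ on $(0,1)$ via the very definition of the decreasing rearrangement. I would then apply Lemma \ref{movingorder} with $X_1 = \abs{W}^*$, $X_2 = \abs{Z}^*$ on $(0,1)$ and target $X' = \abs{W}$ on $\Omega$ (noting $X' \sim X_1$ since $\abs{W}^*\sim \abs{W}$, and $X_1 \geq X_2$): the resulting $X_2' =: U$ on $\Omega$ satisfies $U \sim \abs{Z}$ and $U \leq \abs{W}$. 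The ideal and rearrangement-invariance axioms of $\cX$ then deliver $\norm{Z} = \norm{U} \leq \norm{W}$, which is the desired inequality.

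The only subtlety is that the two regimes $\lambda \geq \alpha$ and $\lambda < \alpha$ patch together precisely because of \eqref{cccc}; the threshold $\alpha$ sits exactly at the place where the two estimates switch from ``set containment'' to ``measure domination''. Beyond that, the proof is routine distribution-function bookkeeping combined with a standard invocation of Lemma \ref{movingorder}.
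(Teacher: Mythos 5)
Your proof is correct, and it takes a genuinely different route from the paper's. You reduce everything to the tail comparison $\bP(\abs{Z}>\lambda)\le\bP(\abs{W}>\lambda)$ for all $\lambda>0$, splitting at the threshold $\lambda=\alpha$ where \eqref{cccc} switches the estimate from set containment ($\{\abs{X}>\lambda\}\subseteq A$ for $\lambda\ge\alpha$) to measure domination ($\bP(A_i)\le\bP(A)$ for $\lambda<\alpha$); this yields $Z^*\le W^*$, and a single application of Lemma \ref{movingorder} to the decreasing rearrangements produces a copy of $\abs{Z}$ dominated a.s.\ by $\abs{W}$, whence the norm inequality follows from the ideal and rearrangement-invariance axioms. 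The paper instead works at the level of the random variables themselves: after replacing $X$ by $\abs{X}$, for each $i$ it relocates the piece of $X\one_{A_i}$ living on $A_i\cap A^c$ into $A\cap A_i^c$ (possible precisely because, by \eqref{cccc}, those values are at most $\alpha$, hence at most the values of $X$ on $A$), obtaining $Z_i\sim X\one_{A_i}$ with $Z_i\le X\one_A$, and then applies Lemma \ref{movingorder} $k$ times to align these under disjoint copies of $X\one_A$, yielding the concrete pointwise domination $\bigoplus_{i=1}^k X\one_{A_i}\le\bigoplus_{i=1}^k X\one_A$ between representatives. Your version isolates the role of \eqref{cccc} more transparently and invokes Lemma \ref{movingorder} only once, at the cost of routing through distribution functions; the paper's version delivers the slightly stronger a.s.\ domination of the direct sums, though that extra information is not needed downstream. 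Both arguments implicitly use that the direct sums are well defined (since $k\bP(A)\le 1$), and both are complete.
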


\begin{proof}
By replacing $X$ with $\abs{X}$, we  assume that $X\geq 0$.
For any $i=1,\dots,k$, since $\bP(A_i\cap A^c)=\bP(A_i)-\bP(A_i\cap A)\leq \bP(A)-\bP(A_i\cap A)=\bP(A\cap A_i^c)$, take $B_i\subset A\cap A_i^c$ such that $\bP(B_i)=\bP(A_i\cap A^c)$. By non-atomicity, find $Z_i\vert_{B_i}\sim X\vert_{A_i\cap A^c}$. Then by \eqref{cccc}, $Z_i\vert_{B_i}\leq \alpha\leq X\vert_{B_i}$. Furthermore, set $Z_i =X$ on ${A_i\cap A}$ and $Z_i=0$ off $B_i\cup (A_i\cap A)$.
Then   $Z_i\sim X\one_{A_{i}}$ and  $Z_i\leq {X}\one_{A}$. Since $\bP(A)\leq \frac{1}{k}$, we can take disjoint random variables $U_i$'s such that $U_i\sim X\one_A$, $i=1,\dots,k$. By Lemma \ref{movingorder} applied to the chain $U_i\sim X\mathbf{1}_A\geq Z_i $, there exist random variables $V_i$'s such that $U_i\geq V_i\sim Z_i$, $i=1,\dots,k$. In particular, $V_i$'s are disjoint. Then $0\leq \bigoplus_{i=1}^k X\one_{A_i}=\sum_{i=1}^k V_i\leq \sum_{i=1}^k U_i=\bigoplus_{i=1}^k X\one_A$ so that $\bignorm{\bigoplus_{i=1}^k X\one_{A_i}}\leq \bignorm{\bigoplus_{i=1}^kX\one_{A}}$.
\end{proof}

For two random variables $X, Y$, write
\[Y
\prec X \quad\text{ if  } \int^s_0 Y^*\,\mathrm{d}t \leq \int^s_0 X^*\,\mathrm{d}t\text{ for all } s\in [0,1].\]
Slightly modifying the proof of \cite[Ch.\ 2, Corollary 4.7]{BS}, one obtains that if $\cX$ is monotonically complete and two random variables $X,Y$ satisfy that $X\in\cX$ and $Y\prec X$, then $Y\in \cX$ and $\norm{Y}\leq M\norm{X}$, where $M$ is the constant in \eqref{moncomconstant}.

\begin{lemma}\label{marjorization}
Let $X,X_1,\dots,X_k$ be non-negative random variables over a non-atomic probability space $(\Omega,\cF,\bP)$ such that $\bP(X\neq 0)\leq \frac{1}{k}$ and $X_i\sim X$ for $i=1,\dots,k$. Let $\lambda_i\geq 0$, $i=1,\dots,k$, be such that $\sum^k_{i=1}\lambda_i =1$. Then
\[ \frac{1}{k} \bigoplus^k_{i=1}X  \prec \sum^k_{i=1}\lambda_i X_i.\]
If in addition $X$ lies in a monotonically complete  r.i.\ space $\cX$ over  $\Omega$, then $\norm{\frac{1}{k} \bigoplus^k_{i=1}X}\leq M\norm{\sum^k_{i=1}\lambda_i X_i}$, where $M$ is the constant in \eqref{moncomconstant} for $\cX$.
\end{lemma}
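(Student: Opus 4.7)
My plan is to establish the majorization by rewriting the left-hand side through a direct distributional computation and then applying the Hardy--Littlewood variational formula $\int_0^s f^*\,\mathrm{d}t = \sup_{\bP(A)\leq s}\int_A f\,\mathrm{d}\bP$ on the right-hand side, with a concrete set $A$ built as a union of near-optimal level sets for the individual $X_i$'s. The norm inequality in the monotonically complete case is then immediate from the fact recorded just before the lemma (that $Y\prec Z$ and $Z\in\cX$ force $\norm{Y}\leq M\norm{Z}$), applied with $Y=\frac{1}{k}\bigoplus_{i=1}^k X$ and $Z=\sum_{i=1}^k \lambda_i X_i \in \cX$ (the latter because $X_i\sim X$).

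First, disjointness of the summands in $\bigoplus_{i=1}^k X$ gives $\bP\bigl(\frac{1}{k}\bigoplus_i X>\lambda\bigr) = k\,\bP(X > k\lambda)$, hence $\bigl(\frac{1}{k}\bigoplus_i X\bigr)^{\!*}(t) = \frac{1}{k} X^*(t/k)$ on $(0,1)$. Substituting $u=t/k$, the majorization reduces to
$$\int_0^{s/k} X^*(u)\,\mathrm{d}u \leq \int_0^s Z^*(t)\,\mathrm{d}t, \qquad Z := \sum_{i=1}^k \lambda_i X_i,$$
for each $s\in[0,1]$. At $s=1$ this is an equality, since $X^*$ is supported in $[0,1/k]$ (so the left side equals $\E[X]$) and $\E[Z]=\E[X]$; it therefore suffices to treat $s\in(0,1)$.

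The crux is, for each such $s$, to exhibit a set $A\subseteq\Omega$ with $\bP(A)\leq s$ such that $\int_A Z\,\mathrm{d}\bP \geq \int_0^{s/k} X^*$. For each $i$, using non-atomicity, I select $A_i\subseteq\Omega$ of probability exactly $s/k$ that maximizes $\int_{A_i} X_i$: concretely $A_i=\{X_i>\mu_i\}$ together with a suitable subset of $\{X_i=\mu_i\}$, where $\mu_i=X_i^*(s/k)$. Because $X_i\sim X$, this gives $\int_{A_i}X_i=\int_0^{s/k}X_i^*=\int_0^{s/k}X^*$. Setting $A=\bigcup_{i=1}^k A_i$, I obtain $\bP(A)\leq s$, and since each $X_i\geq 0$ and $A\supseteq A_i$,
$$\int_A Z\,\mathrm{d}\bP = \sum_{i=1}^k \lambda_i \int_A X_i\,\mathrm{d}\bP \geq \sum_{i=1}^k \lambda_i \int_{A_i} X_i\,\mathrm{d}\bP = \int_0^{s/k} X^*\,\mathrm{d}u.$$
The Hardy--Littlewood bound $\int_A Z \leq \int_0^{\bP(A)} Z^* \leq \int_0^s Z^*$ then closes the inequality.

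The only conceptual step is the ``union trick'' in the third paragraph: each $A_i$ is forced to have probability only $s/k$, which is exactly the budget needed for $\bigcup_i A_i$ to fit inside a set of probability $s$, matching the ratio by which the rearrangement of $\frac{1}{k}\bigoplus_i X$ is spread. Once this set is in hand, everything else is bookkeeping, and the second assertion of the lemma is a direct invocation of the monotonically complete Hardy--Littlewood--P\'olya principle recalled just above.
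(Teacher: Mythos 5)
Your proof is correct and is essentially the paper's own argument: the same choice of sets $A_i$ with $\bP(A_i)=s/k$ realizing $\int_{A_i}X_i=\int_0^{s/k}X_i^*$ (which you construct by hand where the paper cites \cite[Ch.\ 2, Lemma 2.5]{BS}), the same union $A=\bigcup_i A_i$ with $\bP(A)\leq s$, the same Hardy--Littlewood bound $\int_A Z\leq\int_0^s Z^*$, the identity $\bigl(\frac{1}{k}\bigoplus_{i=1}^k X\bigr)^*(t)=\frac{1}{k}X^*(t/k)$, and the same appeal to the remark preceding the lemma for the norm estimate.
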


\begin{proof}For any $s\in [0,1]$, by \cite[Ch.\ 2, Lemma 2.5]{BS}, there exists a measurable set $A_i$ with $\bP(A_i) = \frac{s}{k}$  such that
\[\int_{A_i}X_i = \int^{\frac{s}{k}}_0X^*_i\,dt.\]
Let $A = \bigcup^k_{i=1}A_i$.  Then $\bP(A) \leq s$.
Hence by \cite[Ch.\ 2, Lemma 2.1]{BS},
\begin{align*}
 \int^s_0\Big(\sum^k_{i=1}\lambda_i X_i\Big)^*\,\mathrm{d}t &\geq \int_A\sum^k_{i=1}\lambda_iX_i \geq \sum^k_{i=1}\lambda_i\int_{A_i}X_i \\&= \sum^k_{i=1}\lambda_i\int^{\frac{s}{k}}_0X^*_i\,\mathrm{d}t = \int^{\frac{s}{k}}_0X^*\,\mathrm{d}t\\
 &= \int^s_0 \Big(\frac{1}{k}\bigoplus^k_{i=1}X\Big)^*\,\mathrm{d}t,
 \end{align*}
 where the last equality  follows from the fact $\big(\frac{1}{k}\bigoplus^k_{i=1}X\big)^*(t)=\frac{1}{k}X^*(\frac{t}{k})$. This proves the first assertion.

The additional assertion follows from the remark preceding the lemma.
\end{proof}

\begin{proof}[Proof of Proposition \ref{twocondi}; the second assertion]
Assume that \eqref{3.2} holds. Let $X\in \cX$,  $\varepsilon > 0$, and $A_n\downarrow\emptyset$ with $\sum_{n=1}^\infty \bP(A_n)\leq 1$ be given.  Note that \eqref{3.3} is equivalent to finding $n_1,\dots, n_k$ such that $\norm{\frac{1}{k}\bigoplus^k_{i=1}X\one_{A_{n_i}}} < \varepsilon$.
If $X\in\cX_a$ (in particular, if $\cX$ is order continuous), then $\lim_{\bP(A)\rightarrow0}\norm{X\one_A}=0$. Thus  it is easy to find the desired $n_i$'s. For the following, we assume that $X\not\in \cX_a$ and $\cX$ is monotonically complete.

Take a strictly increasing sequence of positive numbers $(\alpha_m)_{m=1}^\infty$ such that $\sum_{m=1}^\infty\bP(\abs{X}\geq \alpha_m)\leq 1$.
By  \eqref{3.2} applied to $\abs{X}$ and the sequence of measurable sets $(\{\abs{X}\geq \alpha_m\})_{m=1}^\infty$, we obtain  natural numbers $m_1<\cdots< m_k $, random variables $(Z_i)_{i=1}^k$ and a convex combination $\sum_{i=1}^k\lambda_iZ_i$ such that $$Z_i\sim \abs{X}\mathbf{1}_{\{\abs{X}\geq \alpha_{m_i}\}}\text{ for } i=1,\dots,k\quad\text{ and }\quad\Bignorm{\sum_{i=1}^k\lambda_iZ_i}<\frac{\varepsilon}{M},$$
where $M$ is the constant in \eqref{moncomconstant} for $\cX$.
For $i=1,\dots,k$, since $Z_i\sim \abs{X}\mathbf{1}_{\{\abs{X}\geq \alpha_{m_i}\}}\geq \abs{X}\mathbf{1}_{\{\abs{X}\geq \alpha_{m_k}\}}$, by Lemma \ref{movingorder}, there exists  a random variable $Z_i'$ such that $Z_i\geq Z_i'$ and $$Z_i'\sim \abs{X}\mathbf{1}_{\{\abs{X}\geq \alpha_{m_k}\}}.$$
Since $0\leq Z_i'\leq Z_i$ for each $i$, $$\Bignorm{\sum_{i=1}^k\lambda_iZ_i'}<\frac{\varepsilon}{M}.$$
Clearly, $k\bP(\abs{X}\geq \alpha_{m_k})\leq \sum_{i=1}^k\bP(\abs{X}\geq \alpha_{m_i})\leq 1$ so that $\bP(\abs{X}\geq \alpha_{m_k})\leq\frac{1}{k}$.
By Lemma \ref{marjorization} applied to $\abs{X}\mathbf{1}_{\{\abs{X}\geq \alpha_{m_k}\}}$ and $Z_i'$'s, $$\Bignorm{\frac{1}{k} \bigoplus^k_{i=1}\abs{X}\mathbf{1}_{\{\abs{X}\geq \alpha_{m_k}\}} }<\varepsilon.$$
Since $X\notin \cX_a$, $X\notin L^\infty$, consequently $\bP(\abs{X}\geq \alpha_{m_k})>0$.
Thus we can take $n_1,\dots,n_k\in\N$ such that $\bP(A_{n_i})\leq \bP(\abs{X}\geq \alpha_{m_k})$ for $i=1,\dots,k$.
By Lemma \ref{another} applied to $A=\{\abs{X}\geq \alpha_{m_k}\}$, we obtain
$\norm{\frac{1}{k}\bigoplus^k_{i=1}X\one_{A_{n_i}}} < \varepsilon$, as desired.
\end{proof}

\begin{remark}
It was shown in \cite{CGLLa} that the AOCEA property \eqref{convexhull} is    satisfied by nearly all classical r.i.\ spaces other than $L^\infty$, including  Lebesgue spaces $L^p(1\leq p<\infty)$, Lorentz spaces $L^{p,q}$  ($1<p<\infty, 1\leq q\leq \infty$) and Orlicz spaces.
Proposition \ref{twocondi} gives another class of r.i.\ spaces with the AOCEA property: Let $\cX$ be an r.i.\ space over a non-atomic probability space other than $L^\infty$. If $\cX$ satisfies an upper $p$-estimate for some $p\in(1,\infty)$ (see, e.g., \cite[Definition 1.f.4]{LT}), then it clearly satisfies \eqref{3.3} and hence \eqref{3.2} and \eqref{convexhull}. By Proposition \ref{sufficient}, law-invariant bounded linear functionals on all of these spaces collapse to the mean.\end{remark}

The AOCEA property \eqref{convexhull}, however, is not universally satisfied by all r.i.\ spaces. In the next section, we construct an r.i.\ space on which there is a positive law-invariant linear functional that does not collaspe to the mean and thus, in particular, \eqref{convexhull} fails in this space.

\section{A Counterexample}\label{sec4}
For simplicity, we work on $[0,1]$ endowed with the Lebesgue measure. Let $\cX$ be the space of all random variables $X$ on $[0,1]$ such that
\[ \norm{X}:=   \sup_{n\in\N}\Big(n2^n\int^{\frac{1}{2^n\cdot n!}}_0X^*\mathrm{d}t \Big)< \infty.\]
Then $(\cX,\norm{\cdot})$ is an r.i.\ space over $[0,1]$.
Indeed, law-invariance of $\cX$ and $\norm{\cdot}$ is obvious.  Other defining properties of an r.i.\ space can be easily verified using results in \cite[Ch.\ 2]{BS}, in particular, Proposition 1.7 there.   In fact, $\norm{\cdot}$ is obviously positive homogeneous.  To verify   the  triangle inequality, recall from \cite[P.\ 54]{BS} that for any $s\in[0,1]$ and any random variables $X,Y$, $$\int_0^s(X+Y)^*\mathrm{d}t\leq \int_0^sX^*\mathrm{d}t+\int_0^sY^*\mathrm{d}t.$$
It is then immediate that if $X,Y\in\cX$ then $X+Y\in\cX$ and $\norm{X+Y}\leq \norm{X}+\norm{Y}$.
Next, we check that $\cX$ is complete. Since $X^*$ is decreasing and $X^*\sim \abs{X}$, $$\norm{X}\geq 2\int_0^{\frac{1}{2}}X^*\mathrm{d}t\geq \int_0^1X^*\mathrm{d}t=\norm{X}_1.$$
Thus  if $X_n$'s   are random variables such that  $\sup_{n\in\N}\norm{X_n}<\infty$ and $0\leq X_n\uparrow$ a.s., then $\sup_{n\in\N}\norm{X_n}_1<\infty$, so that there exists a random variable $X\in L^1$ such that $X_n\uparrow X$ a.s. By \cite[Ch.\ 2, Proposition 1.7]{BS}, $X_n^*(t)\uparrow X^*(t)$ for any $t\in(0,1)$, so that $\int^{s}_0X_n^*\mathrm{d}t\uparrow \int^{s}_0X^*\mathrm{d}t$ for any $s\in[0,1]$. It follows that $\norm{X_n}\uparrow \norm{X}$. To sum up, we have \begin{align}\label{examplem}
  \text{if } \sup_{n\in\N}\norm{X_n}<\infty \text{ and } 0\leq X_n\uparrow, \text{ then }X:=\lim_nX_n\in \cX \text{ and }\norm{X_n}\uparrow \norm{X}.
\end{align}
This in turn implies the Riesz-Fisher property, i.e, if $(X_n)$ is a sequence in $\cX$ such that $\sum_{n=1}^\infty \norm{X_n}<\infty$, then $\sum_{n=1}^\infty X_n$ converges in $\cX$ in norm (also almost surely). It is well known that the Riesz-Fisher property implies completeness of the norm. Furthermore, \eqref{examplem} actually says that  $(\cX,\norm{\cdot})$ is monotonically complete.

Following the notation in Section \ref{sec2}, set $\cS = \Span\{U-V: U, V\in \cX, U\sim V\}$ and $\cM = \{X\in \cX: \E[X] =0\}$.  We  show that $\cS$ is not dense in $\cM$. This, together with Proposition \ref{equicond}, implies that some  law-invariant  positive linear functional on $\cX$ does not collapse to the mean. Consequently,  $\cX$ fails \eqref{convexhull} and \eqref{3.2} by Proposition \ref{sufficient}  and also fails   \eqref{3.3} by Proposition \ref{twocondi}.

We begin with the following lemma.

\begin{lemma}
Let $m,n\in\N$ and $\varepsilon >0$ be given.
Suppose that $Y,Z, U_i,V_i$ ($1\leq i\leq m$) are random variables in $\cX_+$ such that $\|Z\| < \varepsilon$, $U_i\sim V_i$, $i=1,\dots,m$,  and that \[U: =\sum^m_{i=1}U_i \geq Y- Z +\sum^m_{i=1}V_i.\]
Then for any measurable set $A$ with $\bP(A) \leq \frac{1}{(m+1)2^nn!}$, there exists a measurable set $A'$ containing $A$ such that $\bP(A') \leq (m+1)\bP(A)$ and that
\[ \E[U\one_{A'}] - \E[U\one_A]\geq \E[Y\one_{A}]  -\frac{\varepsilon}{n 2^n}.\]
\end{lemma}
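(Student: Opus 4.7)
The plan is to build $A'$ as the union of $A$ with $m$ auxiliary sets $B_1,\dots,B_m$ chosen by applying Lemma~\ref{movingorder} to each pair $(U_i, V_i)$, so that the $U_i$-mass on $B_i$ reproduces the $V_i$-mass on $A$. The two principal ingredients I would exploit are the norm bound on $Z$ (which, via the very specific shape of $\|\cdot\|$ on this space, quantitatively controls integrals of $Z^*$ over small sets) and the law-invariance $U_i\sim V_i$ used through Lemma~\ref{movingorder}.

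First I would record the consequence of the norm hypothesis. Since $(m+1)\bP(A)\leq 1/(2^n n!)$, the definition of $\|\cdot\|$ immediately gives
\[\int_0^{(m+1)\bP(A)} Z^*(t)\,dt\;\leq\;\int_0^{1/(2^n n!)} Z^*\,dt\;\leq\;\frac{\|Z\|}{n 2^n}\;<\;\frac{\varepsilon}{n 2^n},\]
so in particular $\E[Z\one_B]<\varepsilon/(n 2^n)$ for every measurable $B$ with $\bP(B)\leq(m+1)\bP(A)$. This absorbs the $\varepsilon/(n 2^n)$ slack appearing in the conclusion. For the construction, for each $i$ I would apply Lemma~\ref{movingorder} to the chain $U_i\sim V_i\geq V_i\one_A$ to produce a nonnegative random variable $W_i\leq U_i$ with $W_i\sim V_i\one_A$. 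Setting $B_i:=\{W_i>0\}$ gives $\bP(B_i)\leq\bP(A)$ and $\E[U_i\one_{B_i}]\geq\E[W_i]=\E[V_i\one_A]$. To arrange that the $B_i$ lie in $A^c$ and are pairwise disjoint, I would perform the application iteratively: at step $i$ produce $W_i$ supported in the complement of $A\cup B_1\cup\cdots\cup B_{i-1}$, which by induction has measure at least $1-m\bP(A)\geq 1-1/(2^n n!)$; non-atomicity of this huge complement allows Lemma~\ref{movingorder} to be invoked on the restriction.

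Taking $A':=A\sqcup B_1\sqcup\cdots\sqcup B_m$ then gives $\bP(A')\leq(m+1)\bP(A)$ and, since the $B_i$'s are disjoint from $A$,
\[\E[U\one_{A'\setminus A}]\;=\;\sum_{i=1}^m\E[U_i\one_{A'\setminus A}]\;\geq\;\sum_{i=1}^m\E[U_i\one_{B_i}]\;\geq\;\E[\textstyle\sum_i V_i\one_A].\]
To close the argument, I would combine this with the Hardy--Littlewood--P\'olya inequality: from $U+Z\geq Y+\sum V_i\geq Y$ and subadditivity of $\int_0^{s}X^*$, one has $\int_0^{s}Y^*\leq\int_0^{s}U^*+\int_0^{s}Z^*$ for every $s$.

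\emph{The main obstacle} is reconciling the lower bound $\E[U\one_{A'\setminus A}]\geq\E[\sum V_i\one_A]$ with the desired bound $\geq\E[Y\one_A]-\varepsilon/(n 2^n)$: the integrated form of the hypothesis yields only the upper bound $\E[\sum V_i\one_A]\leq\E[U\one_A]+\E[Z\one_A]-\E[Y\one_A]$, not a lower bound in terms of $\E[Y\one_A]$. The naive $B_i$-construction therefore does not by itself suffice, and I expect the actual proof either enlarges $A'$ to also include a ``top'' portion of $U$ on $A^c$---using the shift bound $(U\one_{A^c})^*(t)\geq U^*(t+\bP(A))$ combined with the Hardy--Littlewood estimate $\int_0^{(m+1)\bP(A)}U^*\geq\int_0^{(m+1)\bP(A)}Y^*-\varepsilon/(n 2^n)$---or couples the iterative construction of the $B_i$ with a simultaneous bookkeeping of $U_i$-mass captured on $A$ itself, leveraging the identity $\sum_i\E[U_i\one_{B_i\cap A}]\leq\E[U\one_A]$ and the relation between the decreasing rearrangements of $U$ and of $\sum U_i$. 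Quantifying this coupling is the technical heart of the argument.
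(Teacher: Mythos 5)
There is a genuine gap, and you have correctly diagnosed it yourself: your construction transfers mass in the wrong direction and, as you note, the resulting bound $\E[U\one_{A'\setminus A}]\geq\sum_i\E[V_i\one_A]$ cannot be converted into the required lower bound $\E[Y\one_A]-\varepsilon/(n2^n)$, because the hypothesis restricted to $A$ only gives $\sum_i\E[V_i\one_A]\leq\E[U\one_A]+\E[Z\one_A]-\E[Y\one_A]$, i.e.\ an inequality with the wrong sign. Your preliminary estimate $\E[Z\one_B]<\varepsilon/(n2^n)$ for $\bP(B)\leq(m+1)\bP(A)$ is correct and is exactly the one used in the paper, but the main argument does not close, and neither of the two repairs you speculate about is what is needed.

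The fix is to reverse the direction of the application of Lemma~\ref{movingorder}: apply it to the chain $V_i\sim U_i\geq U_i\one_A$ to obtain $W_i$ with $U_i\one_A\sim W_i\leq V_i$, i.e.\ push the $U_i$-mass on $A$ \emph{underneath $V_i$}, rather than pushing the $V_i$-mass on $A$ underneath $U_i$. Setting $A'=A\cup\bigcup_{i=1}^m\{W_i\neq0\}$ (no disjointness of the supports is needed; subadditivity already gives $\bP(A')\leq(m+1)\bP(A)$), one has $V_i\one_{A'}\geq W_i$, and integrating the hypothesis inequality over the \emph{enlarged} set $A'$ rather than over $A$ yields
\[
\E[U\one_{A'}]\;\geq\;\E[Y\one_{A'}]+\sum_{i=1}^m\E[V_i\one_{A'}]-\E[Z\one_{A'}]\;\geq\;\E[Y\one_{A}]+\sum_{i=1}^m\E[U_i\one_{A}]-\E[Z\one_{A'}],
\]
which is precisely $\E[U\one_{A'}]-\E[U\one_A]\geq\E[Y\one_A]-\E[Z\one_{A'}]$; your estimate on $\E[Z\one_{A'}]$ then finishes the proof. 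The point you were missing is that the term $\E[U\one_A]$ in the conclusion should arise from $\sum_i\E[V_i\one_{A'}]\geq\sum_i\E[W_i]=\sum_i\E[U_i\one_A]$, not from any accounting of $Y$ or $V_i$ on $A$ itself.
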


\begin{proof}
For $1\leq i\leq m$,  by Lemma \ref{movingorder}, there exists   a random variable $W_i$   such that    $U_i\one_A \sim W_i\leq V_i$. Set $A'=A\cup \bigcup^m_{i=1}\{W_i\neq0\}$. Then $\bP(A')\leq (m+1)\bP(A)$. Since $V_i\one_{A'}\geq W_i\one_{A'}=W_i$, we have
\begin{align*}
\E[U\one_{A'}] &\geq \E[Y\one_{A'}] + \sum^m_{i=1}\E[V_i\one_{A'}] - \E[Z\one_{A'}]\\
&\geq \E[Y\one_{A'}] + \sum^m_{i=1}\E[W_i] - \E[Z\one_{A'}]\\
&= \E[Y\one_{A'}] + \sum^m_{i=1}\E[U_i\one_{A}] - \E[Z\one_{A'}]\\
& \geq  \E[Y\one_{A}] +\E[U\one_{A}] - \E[Z\one_{A'}].
\end{align*}
Since $\bP(A') \leq (m+1)\bP(A) \leq \frac{1}{2^nn!}$, we have, by definition of $\norm{\cdot}$,
\[ \E[Z\one_{A'}] \leq  \int_0^{\frac{1}{2^nn!}}Z^*\mathrm{d}t\leq  \frac{1}{n2^n}\cdot\norm{Z} \leq \frac{\varepsilon}{n2^n},
\]
where the first inequality is due to \cite[Ch.\ 2, Lemma 2.1]{BS}.
This proves the lemma.
\end{proof}

Iterating the lemma $k$ times gives the following lemma.

\begin{lemma}\label{4.2}
Let $m,n, k \in\N$ and   $\varepsilon >0$ be given.
Suppose that $Y,Z, U_i,V_i$ ($1\leq i\leq m$) are random variables in $\cX_+$ such that $\|Z\| < \varepsilon$, $U_i\sim V_i$, $i=1,\dots,m$, and that \[U: =\sum^m_{i=1}U_i \geq Y- Z +\sum^m_{i=1}V_i.\]
For any measurable set $A$ with $\bP(A) \leq \frac{1}{(m+1)^k2^nn!}$, there exists a measurable set $A'$ containing $A$ such that $\bP(A') \leq (m+1)^k\bP(A)$ and that
\[ \E[U\one_{A'}] \geq k\E[Y\one_{A}] + \E[U\one_A] -\frac{k\varepsilon}{n 2^n}.\]
\end{lemma}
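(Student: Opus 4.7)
The statement is the $k$-fold iteration of the preceding lemma, so the natural approach is induction on $k$, re-applying the preceding lemma at each step to the set produced by the previous step. The base case $k=1$ is literally the conclusion of the preceding lemma (rewritten as $\E[U\one_{A'}] \geq \E[U\one_A] + \E[Y\one_A] - \tfrac{\varepsilon}{n2^n}$).

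For the inductive step, suppose the result holds for some $k\geq 1$ and let $A$ be a measurable set with $\bP(A) \leq \frac{1}{(m+1)^{k+1}2^n n!}$. Since $(m+1)^{k+1} \geq (m+1)^k$, the inductive hypothesis applies and produces a set $A_k \supset A$ with $\bP(A_k) \leq (m+1)^k \bP(A)$ and
\[ \E[U\one_{A_k}] \geq k\E[Y\one_A] + \E[U\one_A] - \frac{k\varepsilon}{n2^n}. \]
I would then check that $A_k$ satisfies the hypothesis of the preceding lemma, namely $\bP(A_k) \leq \frac{1}{(m+1)2^n n!}$; this follows from $\bP(A_k) \leq (m+1)^k \bP(A) \leq \frac{1}{(m+1) 2^n n!}$, which is exactly where the exponent $(m+1)^{k+1}$ in the hypothesis on $\bP(A)$ is used. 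Applying the preceding lemma to $A_k$ gives a set $A' \supset A_k \supset A$ with $\bP(A') \leq (m+1)\bP(A_k) \leq (m+1)^{k+1}\bP(A)$ and
\[ \E[U\one_{A'}] \geq \E[U\one_{A_k}] + \E[Y\one_{A_k}] - \frac{\varepsilon}{n2^n}. \]

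The final step is to chain these inequalities, using $Y \geq 0$ (since $Y \in \cX_+$) and $A \subset A_k$ to get $\E[Y\one_{A_k}] \geq \E[Y\one_A]$. Substituting into the above yields
\[ \E[U\one_{A'}] \geq k\E[Y\one_A] + \E[U\one_A] - \frac{k\varepsilon}{n2^n} + \E[Y\one_A] - \frac{\varepsilon}{n2^n} = (k+1)\E[Y\one_A] + \E[U\one_A] - \frac{(k+1)\varepsilon}{n2^n}, \]
which closes the induction.

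I do not expect any genuine obstacle: the proof is essentially bookkeeping. The only place where one has to be careful is ensuring that at each iteration the hypothesis on the measure of the working set remains satisfied; this is exactly why the bound in Lemma~\ref{4.2} carries the factor $(m+1)^k$ rather than $(m+1)$. The monotonicity $\E[Y\one_{A_k}] \geq \E[Y\one_A]$, which relies on positivity of $Y$, is what allows the contribution $\E[Y\one_A]$ to accumulate additively across the $k$ iterations.
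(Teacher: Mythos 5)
Your induction is exactly the iteration the paper intends (the paper simply states ``Iterating the lemma $k$ times gives the following lemma''), and all the bookkeeping checks out: the factor $(m+1)^{k+1}$ in the hypothesis is indeed what guarantees $\bP(A_k)\leq \frac{1}{(m+1)2^nn!}$ at the last application, and the monotonicity $\E[Y\one_{A_k}]\geq\E[Y\one_A]$ from $Y\geq 0$ is the correct way to accumulate the $k\E[Y\one_A]$ term. The proof is correct and matches the paper's approach.
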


We are ready to complete the proof of that $\overline{\cS}\subsetneq \cM$.

\begin{proof}[Proof of $\overline{\cS}\subsetneq \cM$]
Set $$c_n = \frac{1}{2^n\cdot(n+1)!}\quad\text{ for any }n\in\N,$$
and $$Y = \sum^\infty_{n=3}n!\one_{[c_{n+1},c_n)}.$$
We claim that $Y\in\cX$. Indeed, since $Y $ is non-negative and decreasing on $(0,1)$, $Y^*=Y$ a.s. For any $l\geq 4$, since $c_l<\frac{1}{2^l\cdot l!}<c_{l-1}$,
\begin{align*}
\int^{\frac{1}{2^l\cdot l!}}_0Y^*\mathrm{d}t  \leq  \frac{(l-1)!}{2^l\cdot l!}+\sum^\infty_{n=l}n!c_n
= \frac{1}{l2^l}+ \sum^\infty_{n=l}\frac{1}{2^n(n+1)} \leq \frac{3}{l2^l}.
\end{align*}
It follows that $Y\in \cX$, as claimed. Clearly,   $X: = Y - \E[Y]\cdot\one \in \cM$.

We claim that $X\notin \overline{\cS}$.
Assume on the contrary that $X\in \overline{\cS}$. By splitting a random variable into the positive and negative parts, it is easy to see that  $\cS = \Span\{U-V: U, V\in \cX, U\sim V, U,V\geq 0\}$.
Thus there are sequences $(U_i)^m_{i=1}, (V_i)^m_{i=1}$ in $\cX_+$ and $W\in \cX$ with $\|W\| <\frac{1}{8}$ such that $U_i\sim V_i$ for $i=1,\dots,m$ and $X = \sum^m_{i=1}(U_i-V_i) +W$.
Hence
\[ U:= \sum^m_{i=1}U_i \geq Y - Z + \sum^m_{i=1}V_i,\]
where  $Z = \E[Y]\cdot \one + W^+$.
Observe that $\E[Y]=\sum^\infty_{n=3}n!(c_n-c_{n+1})\leq \sum_{n=3}^\infty \frac{1}{2^n(n+1)} <\frac{1}{8}$ and that $\norm{\one}=1$. Hence
 $$\norm{Z} \leq \norm{\E[Y]\cdot \one} + \norm{W} < \frac{1}{4}.$$

For any $k\in\N$, let $n\geq 6$ be such that
\[ c_n - c_{n+1} = \frac{n+\frac{3}{2}}{2^n\cdot(n+2)!} \leq \frac{1}{ (m+1)^k2^n\cdot n!}.\]
Taking $A = (c_{n+1},c_n)$ and $\varepsilon=\frac{1}{4}$ and applying Lemma \ref{4.2}, we obtain a measurable set $A'$ such that $\bP(A') \leq (m+1)^k\bP(A)\leq \frac{1}{2^n\cdot n!}$
and that
\begin{align*} \E[U\one_{A'}] \geq k\E[Y\one_A] - \frac{k }{4n2^n}  = \frac{k}{2^n}\Big(\frac{n+\frac{3}{2}}{(n+1)(n+2)} - \frac{1}{4n}\Big)> \frac{k}{2^{n+2}n}.
\end{align*}
Therefore,
\[ \|U\| \geq n2^n\int^{\frac{1}{2^n\cdot n!}}_0U^*\mathrm{d}t \geq n2^n\E[U\one_{A'}] \geq \frac{k}{4}.
\]
This cannot hold for all $k\in\N$ and concludes the proof.
\end{proof}

\appendix
\section{An alternative proof of Proposition \ref{sufficient}}

\begin{proof}[Alternative proof]
Let $\rho$ be a positive law-invariant linear functional on $\cX$.
Recall from Lemma \ref{lin} that $\rho(X) = \rho(\one)\E(X)$ for all $X\in \cX_a$.
Let $X\in \cX_+$.  By \cite[Theorem 2.2]{CGLLa}, $-\rho$ has the Fatou property
at $0$, i.e., $-\rho(X)\leq \liminf_n\big(-\rho(X_n)\big)$ whenever $X_n\stackrel{a.s.}{\rightarrow}0$ and there exists $X_0\in\cX$ such that $\abs{X_n}\leq X_0$ for all $n\geq 1$. In particular, take $X_n=X\one_{\{X\geq n\}}$ for $n\geq 1$. Since the sequence $(-\rho(X\one_{\{X \geq n\}}))$ is increasing,
\[ 0 = -\rho(0) \leq \liminf_n -\rho(X\one_{\{X \geq n\}}) = -\lim_n \rho(X\one_{\{X \geq n\}})\leq 0.
\]
 Thus $\lim_n \rho(X\one_{\{X \geq n\}}) =0$.
Therefore,
$ \rho(X) = \lim_n [\rho(X\one_{\{X < n\}}) + \rho(X\one_{\{X \geq n\}})] = \lim_n \rho(X\one_{\{X < n\}})$. Since $X\one_{\{X < n\}} \in L^\infty\subset \cX_a$,
\[ \rho(X) = \lim_n \rho(X\one_{\{|X| < n\}}) = \lim_n \rho(\one)\E(X\one_{\{|X| < n\}}) = \rho(\one)\E(X).\]
Since $\rho(X) = \rho(\one)\E(X)$ for all $X\in \cX_+$, the same equation holds for all $X\in \cX$ by linearity of $\rho$. For a general $\rho$, invoke Lemma \ref{reduceto+} to consider $\rho^\pm$.
\end{proof}

{\footnotesize

}

\begin{thebibliography}{99}

\bibitem{AB}
Aliprantis, C.D., Burkinshaw, O.: {\em Positive Operators}, Springer, Dordrecht  (2006)

\bibitem{BC}Bellini, F., Koch-Medina, P., Munari, C., Svindland, G.:
Law-invariant functionals on general spaces of random variables, {\em SIAM Journal on Financial Mathematics}, 12(1), 318-341 (2021)


\bibitem{BCb}Bellini, F., Koch-Medina, P., Munari, C., Svindland, G.:
Law-invariant functionals that collapse to the mean, {\em Insurance: Mathematics and Economics}, 98, 83-91 (2021)



\bibitem{BS}
Bennet, C., Sharpley, R.: {\em Interpolation of Operators}, Academic Press, Inc., Boston (1988)

\bibitem{CGX}
Chen, S., Gao, N., Xanthos, F.: On the strong Fatou property of risk measures, {\em Dependence Modeling}, 6(1), 183-196 (2018)

\bibitem{CGLLa}
Chen, S., Gao, N., Leung, D., Lei, L: Automatic Fatou property of law-invariant risk measures. Preprint available at \textit{arXiv:2107.08109}

\bibitem{FS08}
Filipovi\'c, D., Svindland, G.: Optimal capital and risk allocations for law- and cash-invariant convex functions, {\em Finance and Stochastics}, 12(3), 423-439 (2008)

\bibitem{FS12} Filipovi\'c, D., Svindland, G.: The canonical model space for law-invariant convex risk measures is $L^1$,
{\em Mathematical Finance}, 22(3), 585-589 (2012).

\bibitem{HS4}F\"{o}llmer, H., Schied, A.: {\em Stochastic Finance An Introduction in Discrete Time}, 4th Edition, de Gruyter, Berlin/Boston (2016)

\bibitem{GLMX} Gao, N., Leung, D., Munari, C., Xanthos, F.: Fatou
    property, representations, and extensions of law-invariant risk measures on general
    Orlicz spaces, {\em Finance and Stochastics}, 22, 395-415 (2018)

\bibitem{FM}
Liebrich, F., Munari, C.: Law-invariant functionals that collapse to the mean: beyond convexity. Preprint available at \textit{arXiv:2106.01281}

\bibitem{Joui06}
Jouini, E., Schachermayer, W., Touzi, N.: Law invariant risk measures have the Fatou property, {\em Adv. Math. Econ.}, 9, 49-71 (2006)

\bibitem{Joui08}
Jouini E., Schachermayer W., Touzi N.: Optimal risk sharing for law invariant monetary utility functions, {\em Mathematical Finance}, 18(2), 269-292 (2008)

\bibitem{KSZ}
Kr\"{a}tschmer, V, Schied, A., Z\"{a}hle, H.: Comparative and qualitative robustness for law-invariant risk measures, {\em Finance and Stochastics}, 18, 271-295 (2014)

\bibitem{K01}
Kusuoka, S.: On law invariant coherent risk measures, {\em Advances in Mathematical Economics}, 3, 83-95 (2001)

\bibitem{LT}
Lindenstrauss, J., Tzafriri, L.: {\em Classical Banach Spaces II}, Springer-Verlag, Berlin (1979)

\bibitem{LW}
Liu, P., Wang, R., Wei, L.: Is the inf-convolution of law-invariant preferences law-invariant? {\em Insurance: Mathematics and Economics}, 91, 144-154 (2020)


\bibitem{MN}
Meyer-Nieberg, P.: {\em Banach Lattices}, Springer-Verlag, Berlin (1991)

\bibitem{Svin}
Svindland, G.: Continuity properties of law-invariant (quasi-)convex risk functions on $L^\infty$, {\em Mathematics and Financial Economics}, 3(1), 39-43  (2010)

\bibitem{TL}
Tantrawan, M., Leung, D.: On closedness of law-invariant convex sets in rearrangement invariant spaces, {\em Archiv der Mathematik}, 114(2), 175-183 (2020)

\bibitem{WR}
Wang, R., Zitikis, R.: An axiomatic foundation for the Expected Shortfall, {\em  Management Science}, 67(3), 1413-1429 (2021)

\bibitem{W}
Weber S.: Distribution-invariant risk measures, information, and dynamic consistency, {\em Mathematical Finance}, 16, 419-441 (2006)

\end{thebibliography}
\end{document}